\documentclass[11pt]{amsart}
\usepackage[utf8]{inputenc}
\usepackage[a4paper,margin=3cm]{geometry}
\usepackage[pdftex,pdfborder={0 0 0},colorlinks,pagebackref]{hyperref} 
\usepackage{latexsym,lmodern,graphicx,enumitem,amssymb,dsfont}
\usepackage{bm}

\renewcommand\epsilon\varepsilon
\renewcommand\phi\varphi
\renewcommand\geq\geqslant
\renewcommand\leq\leqslant
\renewcommand\ln\log

\newcommand\RR{\mathbb{R}}

\newcommand\ab\allowbreak
\newcommand{\bone}{\mathbf{1}}
\newcommand{\ent}{\mathrm{Ent}}
\newcommand{\var}{\mathrm{Var}}

\newcommand{\tbar}{\overline{\mathcal{T}}}
\newcommand{\E}{\mathbb{E}}
\renewcommand{\P}{\mathbb{P}}

\theoremstyle{definition}
\newtheorem{Def}{Definition}[section]

\theoremstyle{plain}
\newtheorem{Pro}[Def]{Proposition}
\newtheorem{Lem}[Def]{Lemma}
\newtheorem{The}[Def]{Theorem}

\theoremstyle{remark}

\newtheorem{Rem}[Def]{Remark}

%% Bibliography
\makeatletter
\def\@MRExtract#1 #2!{#1}     
\renewcommand{\MR}[1]{
  \xdef\@MRSTRIP{\@MRExtract#1 !}%
  \href{http://www.ams.org/mathscinet-getitem?mr=\@MRSTRIP}{MR-\@MRSTRIP}}
\makeatother

\begin{document}
\title[Characterization of weak transport-entropy inequalities ]{Characterization  of a class of weak transport-entropy inequalities  on the line}
\author[N. Gozlan, C. Roberto, P.-M. Samson, Y. Shu, P. Tetali]{Nathael Gozlan, Cyril Roberto, Paul-Marie Samson, Yan Shu, Prasad Tetali}
\date{\today}

\thanks{ Supported by the grants ANR 2011 BS01 007 01,  ANR 10 LABX-58, ANR11-LBX-0023-01; the last author is supported by the NSF grants DMS 1101447 and 1407657, and is also grateful for the hospitality of  Universit\'e Paris Est Marne La Vall\'ee. The authors acknowledge the kind support of the American Institute of Mathematics (AIM)}

\address{Universit\'e Paris Est Marne la Vall\'ee - Laboratoire d'Analyse et de Math\'e\-matiques Appliqu\'ees (UMR CNRS 8050), 5 bd Descartes, 77454 Marne la Vall\'ee Cedex 2, France}
\address{Universit\'e Paris Ouest Nanterre La D\'efense - Modal'X, 200 avenue de la R\'epublique 92000 Nanterre, France}
\address{School of Mathematics \& School of Computer Science, Georgia Institute of Technology,
Atlanta, GA 30332}
\email{natael.gozlan@u-pem.fr, c.roberto@math.cnrs.fr,  paul-marie.samson@u-pem.fr, yan.shu@u-paris10.fr, tetali@math.gatech.edu}
\keywords{Transport inequalities, concentration of measure, majorization}
\subjclass{60E15, 32F32 and 26D10}

\maketitle
\begin{abstract} We  study an optimal weak transport cost related to the notion of convex order between probability measures. On the real line, we show that this weak transport cost is reached for a coupling that does not depend on the underlying cost function. As an application, we give a  necessary and sufficient condition for weak transport-entropy inequalities  (related to concentration of convex/concave functions) to hold on the line. In particular, we obtain a weak transport-entropy form of the convex Poincaré inequality in dimension one.    
\end{abstract}
\section{Introduction} 
The aim of this paper is to study a weak transport cost and its associated weak transport-entropy inequality, both introduced by  four of the  authors in \cite{GRST15}, on the real line. In order to present our results, we shall first introduce the various mathematical objects of interest to us, placing and motivating their significance within the classical theory of optimal transport and its connection with the concentration of measure phenomenon.

\medskip

Throughout the paper,  $\mathcal{P}(\RR)$ denotes the set of Borel probability measures on $\RR$ and  
$\mathcal{P}_1(\RR):=\left\{ \mu \in \mathcal{P}(\RR) : \int_\mathbb{R} |x| \mu(dx) < \infty\right\}$, the subset of  probability measures  having a finite first moment.

Let $\theta:\RR^+ \to \RR^+$, with $\theta(0)=0$, be a measurable function referred to as the \emph{cost} function. Then, the usual optimal \emph{transport cost}, in the sense of Kantorovich, between two probability measures $\mu$ and $\nu$ on $\RR$ is defined by
\begin{equation}\label{transport}
\mathcal{T}_\theta(\nu,\mu):=\inf_{\pi}\iint \theta(|x-y|)\,\pi(dxdy),
\end{equation}
where the infimum runs over the set of couplings $\pi$ between $\mu$ and $\nu$, \textit{i.e.,}\ probability measures on $\RR^2$ such that $\pi(dx\times \RR)=\mu(dx)$ and $\pi(\RR\times dy)=\nu(dy)$.

Since the works by Marton \cite{Mar86, Mar96b,Mar96a} and Talagrand \cite{Tal96}, these transport costs have been extensively used as a tool to reach concentration properties for measures on product spaces. More precisely, optimal transport is related to the concentration of measure phenomenon via   the so-called transport-entropy inequalities that we now recall. A probability measure $\mu$ on $\RR$ is said to satisfy the transport-entropy inequality $\mathrm{T}(\theta)$, if for all $\nu \in \mathcal{P}(\RR)$, it holds
\begin{equation}\label{eq:Ttheta-intro}
\mathcal{T}_\theta(\nu,\mu) \leq \mathrm{H}(\nu|\mu), %\qquad\forall \nu\in\mathcal{P}(\RR),
\end{equation}
where $\mathrm{H}(\nu|\mu)$ denotes the relative entropy (also called Kullback-Leibler distance) of $\nu$ with respect to $\mu$, defined by
\[
\mathrm{H}(\nu|\mu):=\int \log\left(\frac{d\nu}{d\mu}\right)\,d\nu,
\]
if $\nu$ is absolutely continuous with respect to $\mu$, and $\mathrm{H}(\nu|\mu):=\infty$ otherwise. Note that 
we focus on the line, but that all the above definitions easily generalize to probability measures on a general metric space. As a special case, as proved by Talagrand in his seminal paper \cite{Tal96}, Inequality \eqref{eq:Ttheta-intro} is satisfied by the standard Gaussian measure for the cost $\theta(x)=x^2/2$. By extension, we shall say that $\mu$ satisfies the inequality $\mathrm{T}_2(C)$ (often referred to as ``Talagrand's inequality'' in the literature), if \eqref{eq:Ttheta-intro} holds for a cost function of the form $\theta(x)=x^2/C$, for some $C>0$.  We refer to the books or survey \cite{Led01,SurveyGL,Vil09,BLM13} for a complete presentation of transport-entropy inequalities and of the concentration of measure phenomenon, as well as for bibliographic references in the field.

In the next few lines we shall shortly discuss the consequences of transport-entropy inequalities in terms of concentration. For simplicity we may only consider Inequality $\mathrm{T}_2(C)$. 

As discovered by Marton and Talagrand, when a probability $\mu$ satisfies $\mathrm{T}_2(C)$, then for \emph{all} positive integers $n$, and all functions $f:\RR^n \to \RR$ which are $1$-Lipschitz with respect to the Euclidean norm on $\RR^n$, it holds
\begin{equation}\label{eq:dim-free}
\mu^n(f > \mathrm{med}(f) + t) \leq e^{-(t-t_o)^2/C},\qquad \forall t \geq t_o := \sqrt{C\log (2)},
\end{equation}
where $\mathrm{med}(f)$ denotes the median of $f$ under $\mu^n.$ We refer to \cite{Led01, BLM13} for a presentation of the numerous applications of this type of dimension-free concentration of measure inequalities. Conversely, it was shown by the first named author in \cite{Goz09} that a probability $\mu$ satisfying the dimension-free Gaussian concentration \eqref{eq:dim-free} necessarily satisfies $\mathrm{T}_2(C)$, thus giving to Inequality $\mathrm{T}_2$ a special status among other functional inequalities appearing in the concentration of measure literature.
The key argument explaining why Talagrand's inequality implies the dimension-free concentration behavior \eqref{eq:dim-free} is the well-known tensorisation property enjoyed in general by inequalities of the form $\mathrm{T}(\theta)$ (see \textit{e.g.}\  \cite{SurveyGL}). The tensorisation property shows in particular that if $\mu$ satisfies $\mathrm{T}_2(C)$, then the $n$-fold product measure $\mu\otimes\cdots\otimes \mu$ also satisfies $\mathrm{T}_2(C)$ (on $\RR^n$) with the same constant $C$.

More generally, given a measure on a product space (which is not necessarily a product measure), and assuming that each of its conditional one-dimensional marginals satisfies a transport-entropy inequality, several authors have obtained, using different non-independent tensorisation strategies, transport-entropy inequalities for the whole measure under weak dependence assumptions (see for instance \cite{DGW04,Wu06,Mar13,Wan14}). Then, the transport-entropy inequality for the whole measure leads again to concentration properties using the same classical arguments as in the product case. 
Thus, in many situations one is reduced to verify the one-dimensional transport-entropy inequalities, 
and therefore it is of a real interest to characterize those probability measures $\mu$ on $\RR$ that satisfy the inequality  
$\mathrm{T}_2$ and more generally $\mathrm{T}(\theta)$ for a general cost function $\theta$. 

In this direction, the first-named author obtained, in \cite{Goz12}, necessary and sufficient conditions for $\mathrm{T}(\theta)$ to hold, when the cost function $\theta:\RR^+\to\RR^+$ is continuous, convex and quadratic near $0$. In order to present such conditions, we need to introduce some notations.
Denote by $F_\mu(x):=\mu(-\infty,x]$, $x \in \RR$, the cumulative distribution function of a probability measure $\mu$ 
and by $F_\mu^{-1}$ its general inverse defined by
\[
F_\mu^{-1}(u):=\inf\{x\in\RR, F_\mu(x)\geq u\}\in \RR \cup\{\pm\infty\},\qquad \forall u\in [0,1].
\]
The conditions obtained in \cite{Goz12} are expressed in terms of the behavior of the modulus of continuity of the non-decreasing map $U_\mu:=  F_\mu^{-1}\circ F_\tau$, where $\tau$ is the symmetric exponential distribution on $\RR$,
\[
\tau(dx):=\frac{1}{2}e^{-|x|}\,dx\,,
\]
so that
\[
U_\mu(x)=\left\{\begin{array}{ll}
F_\mu^{-1}\left(1-\frac12 e^{-|x|}\right) &\mbox{if } x\geq 0\\
F_\mu^{-1}\left( e^{-|x|}\right) &\mbox{if } x\leq 0 .
\end{array}\right.
\]
By construction, $U_\mu$ is the unique left-continuous and non-decreasing map transporting $\tau$ onto $\mu$ (\textit{i.e.}\ $\int f \circ U_\mu d\tau = \int f d\mu$ for all $f$). 
In the special case of the inequality $\mathrm{T}_2$, the characterization of \cite{Goz12} reads as follows: a probability measure $\mu$ satisfies $\mathrm{T}_2(C)$ for some $C$ if and only the following holds
\begin{itemize}
\item for some  constant $b>0$ and all $u \geq 0$, it holds
\[
\sup_{x \in \RR} (U_\mu(x+u)-U_\mu(x)) \leq \frac{1}{b} \sqrt{1+u},
\]
\item for some constant $c>0$ and all $f$ of class  $\mathcal{C}^1$, the Poincar\'e inequality holds 
\begin{equation}\label{eq:Poinc-intro}
\mathrm{Var}_\mu(f) \leq c \int {f'}^2\,d\mu\,.
\end{equation}
%(an inequality known as the Poincar\'e Inequality).
\end{itemize}
We refer to \cite{Goz12} for a precise quantitative relation between $C,b$ and $c$.

\medskip

In the present paper, partly following \cite{Goz12}, we focus on the study of a new weak transport-entropy inequality introduced in \cite{GRST15} that is related to a weak type of dimension-free concentration.
More precisely, in dimension one, we consider the weak optimal transport cost of $\nu$ with respect to $\mu$  defined by
\[
\overline{ \mathcal{T}}_\theta(\nu|\mu)=\inf_{\pi}\int \theta\left(\left|x-\int y\,p(x,dy)\right|\right)\,\mu(dx)\,,
\]
where the infimum runs over all couplings $\pi(dxdy)=p(x,dy)\mu(dx)$ of $\mu$ and $\nu$, and where $p(x,\,\cdot\,)$ denotes the disintegration kernel of $\pi$ with respect to its first marginal. The notation bar comes from the barycenter entering in its definition.
Note that, contrary to the usual transport cost, $\overline{ \mathcal{T}}_\theta$ is not symmetric. Also, in terms of random variables, one has the following interpretation $\overline{ \mathcal{T}}_\theta(\nu|\mu)=\inf \E \left(\theta(|X-\E(Y|X)|)\right)$
whereas $
\mathcal{T}_\theta(\nu,\mu)=\inf \E \left(\theta(|X-Y|)\right)$,
where in both cases the infimum runs over all random variables $X,Y$ such that $X$ follows the law $\mu$ and $Y$ the law $\nu$.
As a consequence, when $\theta$ is convex,  by Jensen's inequality, one has 
$\overline{ \mathcal{T}}_\theta(\nu|\mu)\leq \mathcal{T}_\theta(\nu,\mu)$.
Therefore, if a measure $\mu$ satisfies   $\mathrm{T}(\theta)$ then it also satisfies  the following weaker transport-entropy inequalities.
\begin{Def}\label{defitrans}
Let $\theta: \RR^+ \to \RR^+$ be a convex cost function. A probability measure $\mu$ on $\RR$ is said to satisfy the transport-entropy inequality
\begin{itemize}[leftmargin=.8in]
\item[$\overline{\mathrm{T}}^+(\theta)$:] if for all $\nu\in\mathcal{P}_1(\RR)$, it holds
\[
\overline{\mathcal{T}}_\theta(\nu|\mu)\leq \mathrm{H}(\nu|\mu);
\]
\item[$\overline{\mathrm{T}}^-(\theta)$:] if for all $\nu\in\mathcal{P}_1(\RR)$, it holds
\[
\overline{\mathcal{T}}_\theta(\mu|\nu)\leq \mathrm{H}(\nu|\mu);
\]
\item[$\overline{\mathrm{T}}(\theta)$:] if $\mu$ satisfies both $\overline{\mathrm{T}}^+(\theta)$ and $\overline{\mathrm{T}}^-(\theta)$.
\end{itemize}
\end{Def}

In Section \ref{dualc}, we recall a dual formulation of these weak transport inequalities in terms of infimum convolution operators. In particular, the inequality $\overline{\mathrm{T}}(\theta)$ appears as the dual formulation of the so-called convex $(\tau)$-property introduced by Maurey \cite{Mau91} (see also \cite{Sam03} for further development).   

The above defined weak transport-entropy inequalities are of particular interest since the class of measures satisfying such inequalities also includes discrete measures on $\RR$ such as Bernoulli, binomial and Poisson measures \cite{GRST15, Sam03}. In comparison, the classical Talagrand's transport inequality (and more generally any $\mathrm{T}(\theta)$) is never satisfied by a discrete probability measure unless it is a Dirac\footnote{Indeed, as mentioned above, the  Poincaré inequality is a consequence of Talagrand's transport inequality that forces the support of $\mu$ to be connected.}. Moreover these weak transport-entropy inequalities also enjoy the tensorisation property (see \cite[Theorem 4.11]{GRST15}), thus connecting them to a special dimension-free concentration behavior. For instance, as shown in \cite[Corollary 5.11]{GRST15}, a probability measure $\mu$ satisfies $\overline{\mathrm{T}}_2(C)$ (\textit{i.e.} $\overline{\mathrm{T}}(\theta)$ with $\theta(x)=x^2/C$, $x\in \RR$) if and only if, for all positive integers $n$ and all \emph{convex} or \emph{concave} functions $f \colon \RR^n \to \RR$ which are also $1$-Lipschitz for the Euclidean norm on $\RR^n$, it holds
\[
\mu^n(f >\mathrm{med}(f) + t) \leq e^{-(t-t_o)^2/C'},\qquad \forall t\geq t_o,
\]
where $t_o,C'>0$ are constants related to $C$ (see \cite{GRST15} for a precise and more general statement). 

With these motivations in mind we shall prove the following characterization (the main result of the paper) of the transport inequalities $\overline{\mathrm{T}}(\theta)$ associated to any convex cost function $\theta$ which is quadratic near $0$.
In the sequel we may use the following standard notation $\theta(a\,\cdot\,)$ for the function $x \mapsto \theta(ax)$.

\begin{The}\label{thefinal}
Let $\mu\in \mathcal{P}_1(\RR)$, $t_o>0$ and $\theta:\RR^+\rightarrow \RR^+$ be a convex cost function such that $\theta(t)=t^2$ for all $t\leq t_o$.  The following propositions are equivalent:
\begin{enumerate}
\item[$(i)$] There exists $a>0$ such that $\mu$ satisfies $\overline{\mathrm{T}} (\theta(a\,\cdot\,))$.
\item[$(ii)$] There exists $b>0$ such that for all $u>0$,
\[
\sup_x \left(U_\mu(x+u)-U_\mu(x)\right)\leq \frac{1}{b}\theta^{-1}(u+t_o^2).
\]
\end{enumerate}
Moreover, constants are related as follows: 
\begin{enumerate}
\item[] $(i)$ implies $(ii)$ with $b = a \kappa_1$
\item[]$(ii)$ implies $(i)$, with $a = b\kappa_2$,
\end{enumerate}
where $\kappa_1 := \frac{t_o}{8 \theta^{-1}(\log(3) +t_o^2)}$ and $\kappa_2 := \frac{\min(1,t_o)}{210 \theta^{-1}(2+t_o^2)}$. %are two constants depending only on $\theta$. More precisely,
%\[
%\kappa_1 = \frac{t_o}{8 \theta^{-1}(\log(3) +t_o^2)},
%\]
%and
%\[
%\kappa_2 = \frac{1}{210} \frac{\max(1,t_o)}{\theta^{-1}(2+t_o^2)}.
%\]
\end{The}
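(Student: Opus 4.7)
The strategy is to work on the real line by pulling everything back through the quantile map $U_\mu$ to the symmetric exponential reference $\tau$. The preparatory result announced in the abstract (that the infimum defining $\overline{\mathcal{T}}_\theta(\nu|\mu)$ on the line is attained at a coupling which does not depend on the convex cost $\theta$) is the engine: it will let me rewrite both $\overline{\mathrm{T}}^+(\theta(a\,\cdot\,))$ and $\overline{\mathrm{T}}^-(\theta(a\,\cdot\,))$ in terms of $U_\mu$ alone and reduce statements about $\mu$ to statements about $\tau$ against a cost that encodes the modulus of continuity of $U_\mu$.

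For $(i)\Rightarrow(ii)$, I would test the inequality against well-chosen conditional measures. Fix $x\in\RR$ and $u>0$ and consider $\nu=\bone_{B}\mu/\mu(B)$ with $B=[U_\mu(x+u),+\infty)$ when applying $\overline{\mathrm{T}}^-$, and the symmetric choice for $\overline{\mathrm{T}}^+$. Because $U_\mu$ pushes $\tau$ to $\mu$, the mass $\mu(B)$ is controlled by $\tau([x+u,\infty))$, giving the relative entropy a clean expression of order $u+|x|$. On the transport side, for any admissible coupling the conditional barycenter $\E(Y|X)$ lies in $B$ for $\mu$-a.e.\ $X$, so whenever $X\in A:=(-\infty,U_\mu(x)]$ one has $|X-\E(Y|X)|\geq U_\mu(x+u)-U_\mu(x)$; integrating over $A$, whose mass is of order one for $x$ near $0$, yields $\theta\bigl(a(U_\mu(x+u)-U_\mu(x))\bigr)\mu(A)\leq -\log\mu(B)$, i.e.\ $(ii)$ with $b=a\kappa_1$ after inverting $\theta$ and paying the shift $t_o^2$ coming from the quadratic regime.

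For $(ii)\Rightarrow(i)$, I would use the cost-independent optimal coupling to rewrite $\overline{\mathcal{T}}_{\theta(a\,\cdot\,)}(\nu|\mu)$ as an integral against $\tau$ of $\theta\bigl(a\bigl|U_\mu(\xi)-\int U_\mu(\eta)\,p_\tau(\xi,d\eta)\bigr|\bigr)$, where $p_\tau$ is the pulled-back kernel. The modulus estimate $(ii)$ upgrades to the pointwise bound $|U_\mu(\xi')-U_\mu(\xi)|\leq b^{-1}\theta^{-1}(|\xi-\xi'|+t_o^2)$, which, together with convexity of $\theta$ and the subadditive behavior of $\theta^{-1}$, dominates the pulled-back cost by a standard weak transport cost for $\tau$ with cost $\theta$ itself. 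This reduces the theorem to $\overline{\mathrm{T}}(\theta)$ for the exponential $\tau$, which follows from the convex $(\tau)$-property of Maurey recalled in Section \ref{dualc} via the dual inf-convolution formulation. Propagating constants through the reductions yields $a=b\kappa_2$.

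The main obstacle I expect is the converse direction. Unlike the Kantorovich cost, the barycentric structure of $\overline{\mathcal{T}}_\theta$ prevents any direct Jensen-type reduction: one must genuinely exploit the cost-independent optimizer to pull back through $U_\mu$ without losing information. The subtle point is managing the quadratic regime $t\leq t_o$, where the modulus estimate carries a shift $t_o^2$ inside $\theta^{-1}$ that must be absorbed consistently both when $|\xi-\int\eta\,p_\tau(\xi,d\eta)|$ is small (Poincar\'e-like behaviour) and when it is large (super-quadratic concentration tail). Extracting the explicit $\kappa_1,\kappa_2$ of the announced form will require combining $\overline{\mathrm{T}}^+$ with $\overline{\mathrm{T}}^-$ symmetrically and carefully bookkeeping the constants through the two regimes.
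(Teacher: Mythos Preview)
Your plan for $(ii)\Rightarrow(i)$ has a genuine gap. You claim that the modulus bound $|U_\mu(\xi)-U_\mu(\xi')|\leq b^{-1}\theta^{-1}(|\xi-\xi'|+t_o^2)$ lets you dominate the pulled-back cost by ``a standard weak transport cost for $\tau$ with cost $\theta$ itself'', and then conclude by invoking $\overline{\mathrm{T}}(\theta)$ for the exponential measure. Both steps fail. First, applying $\theta(a\,\cdot\,)$ to the modulus bound with $a=b$ gives at most $|\xi-\xi'|+t_o^2$, which is a cost that is \emph{linear} and does not vanish at the origin; you do not recover $\theta$. Second, and more fundamentally, the exponential measure $\tau$ does \emph{not} satisfy $\overline{\mathrm{T}}(\theta)$ for a general convex $\theta$ that is super-linear at infinity, since that would force sub-exponential tails; so even if the reduction were correct it would lead nowhere. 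The $+t_o^2$ shift inside $\theta^{-1}$ that you flag as ``the subtle point'' is in fact the whole obstruction: it encodes the quadratic regime and cannot be absorbed by a comparison argument alone.

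The paper's proof is structurally different and this difference is essential. It splits $\theta\leq\theta_1+\theta_2$ with $\theta_1$ quadratic-then-linear and $\theta_2$ vanishing on $[0,t_o]$. Condition $(ii)$ gives $\overline{\mathrm{T}}(\theta_2(a_2\,\cdot\,))$ via Theorem~\ref{thebeta} (the cost-vanishing-near-zero case, which \emph{does} reduce cleanly to an exponential integrability criterion), and separately gives $\overline{\mathrm{T}}(\theta_1(a_1\,\cdot\,))$ via the convex Poincar\'e characterization of Theorem~\ref{thm:equiv-Poincare}. The cost-independent coupling (Theorem~\ref{The:OT}) is then used, not to pull back through $U_\mu$, but for the \emph{additivity} $\overline{\mathcal{T}}_{\theta_1+\theta_2}=\overline{\mathcal{T}}_{\theta_1}+\overline{\mathcal{T}}_{\theta_2}$, which recombines the two pieces. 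Your proposal uses the right ingredient for the wrong purpose.

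For $(i)\Rightarrow(ii)$, your testing idea is close in spirit to what the paper does (it tests the dual inf-convolution inequality with $f_x=0$ on $(-\infty,x]$ and $+\infty$ otherwise), but your version has a loose end: you only control $\mu(A)$ ``for $x$ near $0$'', whereas $(ii)$ requires a bound uniform in $x$. You also appear to have interchanged the roles of $\overline{\mathrm{T}}^+$ and $\overline{\mathrm{T}}^-$ (with $\nu$ supported on $B$ and the barycenter taken over $Y\sim\nu$, the relevant inequality is $\overline{\mathrm{T}}^+$, not $\overline{\mathrm{T}}^-$). The paper avoids the uniformity issue by first extracting the exponential integrability condition $K^\pm(b)<\infty$ and then invoking the equivalence with the modulus bound from \cite{Goz12}; a direct entropy test as you outline does not obviously yield the uniform estimate.
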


\begin{Rem}
In comparison with the characterization of the inequalities $\mathrm{T}(\theta)$ given in \cite{Goz12}, one sees that only the condition on the modulus of continuity of $U_\mu$ remains. Nevertheless, as we shall explain below, the Poincar\'e Inequality has not completely disappeared from the picture. 

Also, denoting by $\Delta_\mu$ the modulus of continuity of $U_\mu$ defined by
\[
\Delta_\mu(h)= \sup\left\{U_\mu(x+u)-U_\mu(x), x\in \RR, 0\leq u\leq h\right\},\quad h\geq 0,
\]
Condition $(ii)$ asserts that 
\[
\Delta_\mu(h)\leq \frac{1}{b}\theta^{-1}(h+t_o^2).
\]
Therefore $\Delta_\mu$ is bounded above near zero but does not necessarily go to zero, as $h$ goes to zero. In fact, if the measure $\mu$ is discrete and not a Dirac measure, the support of $\mu$ is not connected and so there exist $a<b$ with $a$ and $b$ in the support of $\mu$ such that $\mu(]a,b[)=0$. In that case, we may easily check that for all $h>0$, $b-a\leq \Delta_\mu(h)$.
This shows that in a discrete setting $\lim_{h\to 0}\Delta_\mu(h)>0$.
 \end{Rem}
%{\bf It would be find to rewrite condition $ii)$ in terms of $F_\mu$ like in Bobkov-Gotze paper for the Poincaré inequality and give examples of measure in discrete setting}

The proof of Theorem \ref{thefinal}, given in Section \ref{sectionfinal}, is based on a refined study of the weak transport cost
$\overline{\mathcal{T}}_\theta(\mu|\nu)$ of independent interest. Indeed, we shall prove that, in dimension 1, all the optimal weak transport costs $\overline{\mathcal{T}}_\theta(\mu|\nu)$ are achieved by the same coupling independently of the convex cost function $\theta$. This result is well-known  for the classical transport cost $\mathcal{T}_\theta$. More precisely,
 it follows from the works by Hoeffding, Fr\'echet and Dall'Aglio \cite{DA56, Fr60, H40} (see also \cite{CSS76}) that
$\mathcal{T}_\theta(\mu,\nu)=\int \theta\left(|x-T_{\nu,\,\mu}(x)|\right)\,\nu(dx)$ where $T_{\nu,\,\mu}:=F_\mu^{-1}\circ F_\nu$.
In particular, given any two convex costs $\theta_1, \theta_2$, it holds 
$\mathcal{T}_{\theta_1+\theta_2}(\mu,\nu) = \mathcal{T}_{\theta_1}(\mu,\nu) + \mathcal{T}_{\theta_2}(\mu,\nu)$.
Our second main result reads as follows (recall that $\nu_1 \preceq \nu_2$ means that $\int f\,d\nu_1 \leq \int f\,d\nu_2$ for all convex functions $f\colon\RR \to \RR$ (one says that $\nu_1$ is dominated by $\nu_2$ in the convex order)).
\begin{The}\label{The:OT}
Let $\mu,\nu \in \mathcal{P}_1(\RR)$ ; there exists a probability measure $\hat{\gamma}$ dominated by $\nu$ in the convex order,  $\hat{\gamma} \preceq \nu$, such that  \emph{for all} convex cost functions $\theta$ it holds
\[
\overline{\mathcal{T}_\theta}(\nu|\mu) = \mathcal{T}_\theta(\hat{\gamma},\mu).
\]
In particular, for any two convex cost functions $\theta_1, \theta_2$, it holds 
\begin{equation}\label{eval}
\overline{\mathcal{T}}_{\theta_1+\theta_2}(\mu,\nu) = \overline{\mathcal{T}}_{\theta_1}(\mu,\nu) 
+ \overline{\mathcal{T}}_{\theta_2}(\mu,\nu).
\end{equation}
\end{The}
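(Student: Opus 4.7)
The plan is to first reduce $\overline{\mathcal{T}}_\theta(\nu|\mu)$ to a classical transport problem over measures in the convex order, and then to build a universal minimiser simultaneously optimal for all convex $\theta$. For the reduction, I would establish the identity $\overline{\mathcal{T}}_\theta(\nu|\mu) = \inf_{\gamma \preceq \nu} \mathcal{T}_\theta(\gamma,\mu)$. The ``$\geq$'' direction comes from the barycenter: given any coupling $\pi(dx\,dy) = p(x,dy)\mu(dx)$, the map $m(x) := \int y\,p(x,dy)$ pushes $\mu$ forward to some $\gamma$; Jensen's inequality gives $\gamma \preceq \nu$, and $(x,m(x))$ is a coupling of $\mu$ and $\gamma$ whose cost bounds $\mathcal{T}_\theta(\gamma,\mu)$ from above. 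For ``$\leq$'', given $\gamma \preceq \nu$ I apply Strassen's theorem to produce a martingale kernel presenting $\nu$ as an average of $\gamma$, then glue it to the monotone coupling of $\mu$ and $\gamma$; since the monotone transport map $T = F_\gamma^{-1}\circ F_\mu$ is deterministic in $x$, the resulting coupling of $\mu$ and $\nu$ has barycenter $T(x)$ and thus realises cost $\mathcal{T}_\theta(\gamma,\mu)$.

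For the second stage, write $G = F_\nu^{-1}$, $H = F_\mu^{-1}$, $\Phi(u) = \int_0^u G$, $\Psi(u) = \int_0^u H$. Setting $K := F_\gamma^{-1}$ and $\hat\Phi(u) := \int_0^u K$, the convex order $\gamma \preceq \nu$ becomes: $\hat\Phi$ is convex with $\hat\Phi \geq \Phi$ on $[0,1]$, $\hat\Phi(0) = 0$ and $\hat\Phi(1) = \Phi(1)$, and $\mathcal{T}_\theta(\gamma,\mu) = \int_0^1 \theta(|K-H|)\,du$. I would define the $\theta$-independent candidate $\hat K$ via the KKT-type prescription: on the contact set $\{\hat\Phi = \Phi\}$ set $\hat K = G$; on each maximal complementary ``free'' interval $(a,b)$, set $\hat\Phi(u) = \Psi(u) + c$ with $c = \Phi(a) - \Psi(a) = \Phi(b) - \Psi(b)$, so that $\hat K|_{(a,b)} = H|_{(a,b)}$. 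Concretely, the free intervals are the maximal subintervals on which $\Phi - \Psi$ attains its maximum at both endpoints, and the crossing inequalities $G(a) \leq H(a)$ and $G(b) \geq H(b)$ at their boundaries ensure both that $\hat K$ is non-decreasing and that $\hat\Phi$ is convex. Let $\hat\gamma$ be the probability measure with quantile $\hat K$.

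The main obstacle is proving that this single $\hat\gamma$ is optimal simultaneously for every convex $\theta$. My strategy is a rearrangement: for any admissible competitor $K$ with primitive $\tilde\Phi \geq \Phi$, transform $K$ into $\hat K$ by a sequence of elementary moves, each of which does not increase $\int \theta(|K-H|)\,du$ for any convex $\theta$. The central lemma is that on each free interval $[a,b]$ the endpoint inequalities $\tilde\Phi(a) \geq \hat\Phi(a)$ and $\tilde\Phi(b) \geq \hat\Phi(b)$, combined with $\hat K = H$ on $(a,b)$, force $|K - H|$ to dominate $|\hat K - H|$ in the Hardy--Littlewood--P\'olya convex order, so that $\int \theta(|\hat K - H|)\,du \leq \int \theta(|K - H|)\,du$ for every convex $\theta$. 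The additivity \eqref{eval} is then immediate, since both sides equal $\mathcal{T}_{\theta_1+\theta_2}(\hat\gamma,\mu) = \mathcal{T}_{\theta_1}(\hat\gamma,\mu) + \mathcal{T}_{\theta_2}(\hat\gamma,\mu)$. I expect the real technical difficulty to lie in this convex-order comparison, particularly when $\mu$ or $\nu$ have atoms and free intervals exchange mass with the contact set; the failure of any such universal minimiser in dimension $\geq 2$ confirms that the argument must genuinely exploit the monotone structure of $\RR$.
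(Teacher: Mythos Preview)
Your reduction $\overline{\mathcal{T}}_\theta(\nu|\mu)=\inf_{\gamma\preceq\nu}\mathcal{T}_\theta(\gamma,\mu)$ is essentially the paper's Proposition~\ref{pronu1} (with the same Strassen-plus-gluing argument; note that the paper keeps track of the extra condition $\gamma\in\mathrm{Im}^\uparrow(\mu)$, needed when $\mu$ has atoms so that a monotone \emph{map}, not merely a monotone coupling, is available). The overall plan---produce a single $\hat\gamma$ via an $L^2$-type projection and show it is optimal for every convex $\theta$---is also the paper's. But the explicit candidate you write down is wrong.

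Take $\mu=\tfrac12(\delta_{-2}+\delta_2)$ and $\nu$ uniform on $[-1,1]$. Then $(\Phi-\Psi)(u)$ equals $u^2+u$ on $[0,\tfrac12]$ and $u^2-3u+2$ on $[\tfrac12,1]$, a strictly unimodal function with its unique maximum at $u=\tfrac12$; under your rule there is no non-degenerate free interval, so you output $\hat K=G$, i.e.\ $\hat\gamma=\nu$. But the actual universal optimiser is $\hat\gamma=\tfrac12(\delta_{-1/2}+\delta_{1/2})$: one checks directly that $\tfrac12(\delta_{-m}+\delta_m)\preceq\nu$ forces $m\le\tfrac12$, and for $\theta(t)=t^2$ this gives cost $\tfrac94<\tfrac73=\mathcal{T}_\theta(\nu,\mu)$. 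The flaw is the formula $\hat\Phi=\Psi+c$ on a free interval. The first-order condition for the $L^2$ projection gives $\hat K-H=\mathrm{const}$ on each free component, i.e.\ $\hat\Phi=\Psi+(\text{affine})$, not $\Psi+(\text{constant})$; in the example the true contact set is $\{0,\tfrac12,1\}$ and on the two free intervals $\hat K=H\pm\tfrac32$. Determining the free set is itself an obstacle problem and does not reduce to the level sets of $\Phi-\Psi$ as you describe. Your ``main lemma'' on convex-order domination is stated for the wrong $\hat K$ and would have to be reformulated accordingly.

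The paper bypasses the continuum obstacle problem entirely. It first treats uniform discrete measures $\mu=\tfrac1n\sum\delta_{a_i}$, $\nu=\tfrac1n\sum\delta_{b_i}$, where $\{\gamma:\gamma\preceq\nu\}$ becomes the permutahedron $\mathrm{Perm}(b)$, and proves (Theorem~\ref{goodprop}) via its face and normal-cone structure that the Euclidean projection $\hat c$ of $a$ onto $\mathrm{Perm}(b)$ satisfies $a-\hat c\preceq a-c$ for \emph{every} $c\in\mathrm{Perm}(b)$; this is exactly the universal optimality statement, and incidentally the discrete form of ``$\hat K-H$ constant on each block''. General $\mu,\nu$ are then handled by $W_1$-approximation and Kantorovich duality. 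Your quantile framework could in principle be carried through, but you would need the corrected construction together with a proof that the resulting $\hat K$ is non-decreasing and beats every competitor in the convex order of $K-H$; that is precisely the content of Theorem~\ref{goodprop}, which the paper isolates in the finite-dimensional setting where the geometry is cleanest.
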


The notion of convex ordering, characterized by Strassen \cite{Str65} in terms of martingales, will turn out to be crucial in the understanding of the weak transport costs $\overline{\mathcal{T}_\theta}$. In Section \ref{ordering}, we recall certain classical properties of the convex order and in particular its geometrical meaning (in discrete setting) given by Rado's theorem \cite{Rad52} (see Theorem \ref{Rado}). From this geometrical interpretation, we shall obtain an intermediate outcome (Theorem \ref{goodprop}) that might be interpreted as a discrete version of Theorem \ref{The:OT}. Finally, the proof of Theorem \ref{The:OT}, given in Section \ref{propopti}, will follow by an approximation argument.

\medskip

With the result of Theorem \ref{The:OT} in hand, we can briefly introduce the main ideas of the proof of Theorem \ref{thefinal}. 
Following \cite{Goz12}, the weak transport-entropy inequality  $(i)$ will follow from $(ii)$ by decomposition of the weak optimal cost $\theta$ into two parts. One part is related  to the quadratic behavior of $\theta$ on $[0,t_o]$ and the  second part is related to its behavior for $t\geq t_o$: one has $\theta\leq \theta_1+\theta_2$ with 
\[
\theta_1(t):=t^2\bone_{[0,t_o]}(t)+(2tt_o-t_o^2)\bone_{[t_o,+\infty)}(t),
\]
and 
\[
\theta_2(t):=[\theta(t)-t^2]_+=(\theta(t)-t^2)\bone_{[t_o,+\infty)}(t), \qquad t\in \RR.
\]
Therefore, by Theorem \ref{The:OT},
\begin{equation*}%\label{eval}
\overline{\mathcal{T}}_{\theta(a\,\cdot\,)} (\nu |\mu) 
\leq 
\overline{\mathcal{T}}_{(\theta_1+\theta_2)(a\,\cdot\,)} (\nu |\mu)=  \overline{\mathcal{T}}_{\theta_1(a\,\cdot\,)} (\nu |\mu) + \overline{\mathcal{T}}_{\theta_2(a\,\cdot\,)} (\nu |\mu) \leq 2  \mathrm{H}(\nu|\mu)\,,
\end{equation*}
for a proper choice of the constant $a$, where the last inequality will follow by relating the condition appearing in $(ii)$ 
to the two weak transport-entropy inequalities with cost $\theta_1(a\,\cdot\,)$ and $\theta_2(a\,\cdot\,)$.
More precisely, following \cite[Theorem 2.2]{Goz12}, we will show in Theorem \ref{thebeta} that $(ii)$ characterizes the 
weak transport-entropy inequality $\overline{\mathrm{T}}(\theta_2(a\,\cdot\,))$ while, as stated in the next Theorem (our last main result), the weaker condition $\sup_x \left(U_\mu(x+1)-U_\mu(x)\right)\leq h$,
for some $h>0$, characterizes the weak transport-entropy inequality $\overline{\mathrm{T}}(\theta_1(a\,\cdot\,))$ which thus appears (thank to \cite{BG99b}) to be also equivalent to the Poincaré inequality \eqref{eq:Poinc-intro} restricted to convex functions.

\begin{The}\label{thm:equiv-Poincare}
Let $\mu$  be a probability measure on $\RR$. The following assertions are equivalent:
\begin{enumerate}
\item[$(i)$] There exists $h>0$ such that
\[
  \sup_{x\in \RR}[U_\mu(x+1)-U_\mu(x)]\leq h.
\]
\item[$(ii)$] There exists $C>0$ such that for all convex function $f$ on $\RR$ it holds
\begin{equation*}%\label{Pc}
  \var_\mu(f)\leq C \int_\RR f'^2\,d\mu.
\end{equation*}

\item[$(iii)$] There exist $D,l_o>0$ such that $\mu$ satisfies $\overline{\mathrm{T}}^-(\alpha)$ and $\overline{\mathrm{T}}^+(\alpha)$. Namely, it holds 
\[
\overline{\mathcal{T}}_\alpha (\mu|\nu) \leq H(\nu|\mu)
\qquad \mbox{and} \qquad
\overline{\mathcal{T}}_\alpha (\nu|\mu) \leq H(\nu|\mu)\qquad \forall \nu \in \mathcal{P}_1(\RR),
\]
for the cost function $\alpha$ defined by
\[
\alpha(u) = \left\{\begin{array}{ll}\frac{u^2}{2D} & \text{ if } |u| \leq l_oD \\ l_o|u|-l_o^2D/2 & \text{ if } |u|>l_oD . \end{array}\right.
\]
\end{enumerate}
In particular, with $\kappa:=5480$ and $c:=1/(10 \sqrt{2})$,
\begin{itemize}
\item $(i)\Rightarrow (iii)$ with $D=\kappa h^2$ and $l_o = c/h$,
\item $(iii) \Rightarrow (ii)$ with $C=2D$,
\item $(i) \Rightarrow (ii)$  with $C=\kappa h^2$.
\end{itemize}
\end{The}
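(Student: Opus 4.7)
The plan is to close the cycle $(i)\Rightarrow(iii)\Rightarrow(ii)\Rightarrow(i)$, with only $(i)\Rightarrow(iii)$ being genuinely new: $(ii)\Rightarrow(i)$ is essentially the Bobkov--G\"otze style characterization of the convex Poincar\'e inequality on the line \cite{BG99b}, obtained by testing $(ii)$ against the convex $1$-Lipschitz functions $f(x) = (x-F_\mu^{-1}(p))_+$ and computing $\var_\mu(f)$ and $\int f'^2\,d\mu$ explicitly after the change of variables $y = F_\tau(F_\mu^{-1}(x))$.

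The heart of the argument is $(i)\Rightarrow(iii)$. From $(i)$ and the monotonicity of $U_\mu$ one first derives the sub-linear estimate $U_\mu(y+u) - U_\mu(y) \leq h(1+|u|)$ for all $y \in \RR$ and $u \geq 0$. Next, one uses the fact (see \cite{GRST15,Sam03}) that the symmetric exponential $\tau$ itself satisfies the weak transport-entropy inequality $\overline{\mathrm{T}}(\alpha_0)$ for an explicit quadratic-linear cost $\alpha_0$, with universal constants. To transfer this to $\mu$, fix $\nu\in \mathcal{P}_1(\RR)$ and apply Theorem \ref{The:OT} to produce a single measure $\hat{\gamma}\preceq \nu$ such that
\[
\overline{\mathcal{T}}_\alpha(\nu|\mu) = \int \alpha\bigl(|x-T(x)|\bigr)\,\mu(dx),\qquad T := F_\mu^{-1}\circ F_{\hat{\gamma}},
\]
\emph{independently} of the convex cost $\alpha$. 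Factor $T = U_\mu \circ S$ with $S := F_\tau^{-1}\circ F_{\hat{\gamma}}$ and substitute $x = U_\mu(y)$ with $y\sim\tau$; the sub-linear bound on $U_\mu$ then yields the pointwise comparison
\[
\alpha\bigl(|U_\mu(y) - U_\mu(S(y))|\bigr) \leq \alpha\bigl(h(1+|y-S(y)|)\bigr).
\]
Calibrating $D = \kappa h^2$ and $l_o = c/h$ so that $u \mapsto \alpha(h(1+u))$ is uniformly dominated by $\alpha_0(u)$ (up to an absorbable additive constant), integrating against $\tau$, and invoking $\overline{\mathrm{T}}(\alpha_0)$ for $\tau$ applied to the push-forward $(F_\mu\circ U_\mu)_\#\hat{\gamma}$, one arrives at $\overline{\mathcal{T}}_\alpha(\nu|\mu) \leq H(\nu|\mu)$; the dual inequality $\overline{\mathrm{T}}^-(\alpha)$ follows symmetrically since the coupling produced by Theorem \ref{The:OT} works in both directions.

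The implication $(iii)\Rightarrow(ii)$ is a routine linearization. For a bounded convex $\mathcal{C}^1$ function $f$, set $g := f-\E_\mu f$ and $d\nu_\epsilon := (1+\epsilon g)\,d\mu/Z_\epsilon$ for small $\epsilon>0$. As $\epsilon\to 0$ the optimal coupling in $\overline{\mathcal{T}}_\alpha(\nu_\epsilon|\mu)$ concentrates on the diagonal, so $|x-\E(Y|X=x)|$ stays in the quadratic regime $|u|\leq l_oD$ where $\alpha(u)=u^2/(2D)$. A second-order expansion of both sides of $\overline{\mathrm{T}}^+(\alpha)$ in $\epsilon$, mirroring the Otto--Villani linearization adapted to the weak cost as in \cite{GRST15}, gives $\frac{1}{2D}\int g'^2\,d\mu \leq \frac{1}{2}\var_\mu(g)$, i.e.\ $C = 2D$. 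The general (unbounded) convex case follows by truncation.

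The main obstacle is the transfer step in $(i)\Rightarrow(iii)$: because $U_\mu$ is only Lipschitz on the scale $1$ and may possess arbitrarily large jumps whenever $\mu$ has disconnected support, one cannot hope to push a transport inequality directly from $\tau$ to $\mu$. What rescues the argument is precisely the cost-independence of the optimal coupling $\hat{\gamma}$ provided by Theorem \ref{The:OT}: it makes the pointwise comparison above possible for every convex $\alpha$ simultaneously, and careful bookkeeping of the quadratic/linear transition in $\alpha$ versus the $h(1+|u|)$-growth of $U_\mu$ is what produces the explicit constants $\kappa$ and $c$.
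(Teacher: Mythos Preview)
Your route for $(i)\Rightarrow(iii)$ is genuinely different from the paper's and, as written, does not close. The paper does \emph{not} use Theorem~\ref{The:OT} here at all: it first proves a discrete modified log--Sobolev inequality for the exponential law $\tau$ (Lemma~\ref{lem:expo}), transports it through $U_\mu$ to obtain a modified log--Sobolev inequality for $\mu$ restricted to convex/concave Lipschitz functions (Proposition~\ref{LSM}), and then runs the Bobkov--Gentil--Ledoux Hamilton--Jacobi interpolation to reach the infimum-convolution inequality, which is the dual form of $\overline{\mathrm{T}}^\pm(\alpha)$ via Lemma~\ref{Lem:equiv-inf-conv}. The increment condition $(i)$ enters only through the pointwise bound $|g(U_\mu(x))-g(U_\mu(x\pm 1))|\leq lh$, which is a purely local estimate and is exactly what the discrete log--Sobolev ingredient needs.

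Your transfer argument, by contrast, requires a \emph{global} comparison, and this is where it breaks. After the substitution $x=U_\mu(y)$ one has
\[
\mathcal{T}_\alpha(\hat\gamma,\mu)=\int \alpha\bigl(|U_\mu(y)-U_{\hat\gamma}(y)|\bigr)\,\tau(dy),
\]
and the bound $|U_\mu(y)-U_\mu(S(y))|\leq h(1+|y-S(y)|)$ is only usable if $U_{\hat\gamma}(y)=U_\mu(S(y))$ for some $S$. But $\hat\gamma$ produced by Theorem~\ref{The:OT} need not be supported in the range of $U_\mu$ (already for $\mu$ with disconnected support and $\nu\ll\mu$ the optimal $\hat\gamma$ typically puts mass strictly between atoms of $\mu$), so no such $S$ exists. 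Even setting this aside, the ``absorbable additive constant'' is not absorbable: the sublinear bound yields at best $\overline{\mathcal{T}}_\alpha(\nu|\mu)\leq c_1\,\overline{\mathcal{T}}_{\alpha_0}(\sigma|\tau)+\alpha(h)$, and an additive constant in a transport--entropy inequality cannot be traded for a multiplicative one. Finally, to close the loop you would need $H(\sigma|\tau)\leq H(\nu|\mu)$ with $\sigma$ related to $\hat\gamma$ or $\nu$ through $U_\mu^{-1}$; but convex order is not preserved under non-affine monotone maps, so knowing $\hat\gamma\preceq\nu$ does not give you control on the $\tau$ side. The cost-independence of $\hat\gamma$ does not help with any of these three issues.

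A minor point on $(iii)\Rightarrow(ii)$: your displayed inequality $\tfrac{1}{2D}\int g'^2\,d\mu\leq \tfrac{1}{2}\mathrm{Var}_\mu(g)$ has the direction reversed; the linearization (done in the paper via the dual formulation \eqref{eq:property tau} applied to $g=\varepsilon f$) yields $\mathrm{Var}_\mu(f)\leq 2D\int f'^2\,d\mu$. Linearizing the primal weak transport cost directly is delicate because one needs a \emph{lower} bound on $\overline{\mathcal{T}}_\alpha(\nu_\varepsilon|\mu)$ to order $\varepsilon^2$; working on the dual side avoids this.
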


The equivalence $(i) \Leftrightarrow (ii)$ goes back to Bobkov and  G\"otze  \cite{BG99b}. Hence, Theorem \ref{thm:equiv-Poincare} completes the picture by showing that $(i)/(ii)$ also  characterize the measures satisfying  a weak transport-entropy inequality  with a cost function which is quadratic near zero and then linear (like $\theta_1$). The dependence between the constants in the implication $(ii) \Rightarrow (i)$ is not given for technical reasons. Indeed, the proof relies on an argument from \cite{BG99b} that uses a non trivial proof from \cite{BH00} where one loses the explicit dependence on the constants.

We indicate that during the preparation of this work, we learned that the characterization of the convex Poincaré inequality in terms of the convex $(\tau)$-Property (which is equivalent to the transport-entropy inequalities of Item $(iii)$, by duality, as we shall recall in Lemma~\ref{Lem:equiv-inf-conv}) was obtained by Feldheim, Marsiglietti, Nayar and Wang in a recent paper \cite{FMNW15}.

The proof of Theorem \ref{thm:equiv-Poincare} is given in Section \ref{sectionpoincare}. It uses  results of independent interest like a new discrete logarithmic Sobolev inequality for the exponential measure $\tau$ (Lemma \ref{lem:expo}). By transportation techniques, such a logarithmic-Sobolev inequality provides logarithmic-Sobolev inequalities restricted to the class of convex or concave  functions for measures satisfying the condition in Item $(i)$ (Corollary \ref{LSM}). Then  the  weak transport-entropy inequalities of Item $(iii)$ are obtained in their dual forms, involving infimum convolution operators (Lemma~\ref{Lem:equiv-inf-conv}), by means of the Hamilton-Jacobi semi-group approach of Bobkov, Gentil and Ledoux \cite{BGL01}, an approach also generalized in \cite{LV07,GRS14,GRST15}. 
%Finally, in the Appendix, we present a new discrete Hardy-type of inequality in Lemma \ref{Hardy}. This is a useful tool to reach the discrete logarithmic Sobolev inequality for  the exponential measure $\tau$.   Let us observe  that this Lemma could  also be used to recover differently the characterization by Bobkov and G\"otze, of the Poincaré inequality restricted to convex functions. %We refer to \cite{ABC+} for a better understanding of classical  connections between 
%Hardy's inequalities and Poincaré inequalities on the line.
 
\medskip

The paper is organized as follows. In the next section, we introduce and recall some known properties of the convex ordering
 that we shall use in Section \ref{propopti} to prove Theorem \ref{The:OT}.
 Then, in Section \ref{dualc}, we very briefly recall the dual formulation of the weak-transport entropy inequalities $\overline{\mathrm{T}}^\pm$ and $\overline{\mathrm{T}}$, borowed from \cite{GRST15}, which will be useful later on.
Finally, Section \ref{sectionpoincare}  and \ref{sectionfinal} are devoted to the proofs of Theorem \ref{thm:equiv-Poincare} and \ref{thefinal}, respectively.

\medskip

\textbf{Acknowledgment:} The authors would like to warmly thank Greg Blekherman for discussions on the geometric aspects in Section \ref{sec:geomaspect}, including his help with the proof of Theorem \ref{goodprop}.

\section{Convex ordering and a majorization theorem}\label{ordering}

This section is devoted to the study of the convex ordering.  
After recalling some classical definitions and results, we shall prove a majorization theorem which will be a key ingredient in the proof of Theorem \ref{The:OT}.

%\subsection{A reminder on convex ordering, majorization of vectors and the Strassen Theorem}

\subsection{A reminder on convex ordering and the Strassen Theorem}
We collect here some basic facts about convex ordering of probability measures. We refer the interested reader to \cite{Majorization} and \cite{Hirsch} for further results and bibliographic references. All the proofs are well-known, we state some of them for completeness.

We start with the definition of the convex order.

\begin{Def}[Convex order] Given $\nu_1,\nu_2 \in \mathcal{P}_1(\RR)$,  we say that $\nu_2$ dominates $\nu_1$ in the convex order,
and write $\nu_1\preceq\nu_2$, if for all convex functions $f$ on $\RR$,
$\int_\RR f\,d\nu_1\leq \int_\RR f\,d\nu_2$.
\end{Def}

\begin{Rem}
Observe that for any probability measure belonging to $\mathcal{P}_1(\RR)$ the integral of any convex function always makes sense in $\RR\cup\{+\infty\}$. 
\end{Rem}

The convex ordering of probability measures can be determined by testing only some restricted classes of convex functions as the following proposition indicates.
\begin{Pro}\label{equiv-conv-order}
Let $\nu_1,\nu_2 \in \mathcal{P}_1(\RR)$ ; the following are equivalent
\begin{enumerate}
\item[(i)] $\nu_1 \preceq \nu_2$,
\item[(ii)] $\int x\,\nu_1(dx) = \int x \, \nu_2(dx)$ and for all Lipschitz and non-decreasing and non-negative convex function 
$f \colon \RR \to \RR^+$, $\int f(x)\,\nu_1(dx) \leq \int f(x)\,\nu_2(dx)$.
\item[(iii)] $\int x\,\nu_1(dx) = \int x \, \nu_2(dx)$ and for all $t \in \RR$, $\int [x-t]_+\,\nu_1(dx) \leq \int [x-t]_+\,\nu_2(dx)$.
\end{enumerate}
\end{Pro}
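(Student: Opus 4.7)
The plan is to show the cyclic chain of implications $(i)\Rightarrow (ii)\Rightarrow (iii)\Rightarrow (i)$, of which only the last is substantive. For $(i)\Rightarrow (ii)$, I would apply the convex-order inequality to the two convex functions $x\mapsto x$ and $x\mapsto -x$ to force equality of the first moments, the remaining inequality in $(ii)$ being a direct specialization of $(i)$. For $(ii)\Rightarrow (iii)$, each function $x\mapsto [x-t]_+$ is non-negative, non-decreasing, convex and $1$-Lipschitz, so lies in the test class of $(ii)$.

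The substantive step is $(iii)\Rightarrow (i)$. As a preliminary, I would promote $(iii)$ to the dual inequality
\[
\int [t-x]_+\,d\nu_1(x) \leq \int [t-x]_+\,d\nu_2(x), \qquad t\in\RR,
\]
by means of the identity $[t-x]_+ = [x-t]_+-(x-t)$: upon integration, the linear summand contributes the same for both $\nu_i$ by the equality of first moments, and so cancels from the inequality.

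The heart of the argument is to write an arbitrary convex $f\colon \RR\to\RR$ as a non-negative combination of an affine function and the two hockey-stick families $\{[x-s]_+\}_s$ and $\{[s-x]_+\}_s$. Letting $f'_+$ denote the right derivative of $f$ (non-decreasing and right-continuous) and $\mu$ the associated Lebesgue--Stieltjes measure on $\RR$, a direct computation based on Fubini yields
\[
f(x) = f(0) + f'_+(0)\,x + \int_{(0,\infty)} [x-s]_+\,d\mu(s) + \int_{(-\infty,0]} [s-x]_+\,d\mu(s), \qquad x\in\RR.
\]
Integrating this identity against $\nu_i$ and swapping the order of integration (Tonelli, legitimate since the hockey-stick terms are non-negative) expresses $\int f\,d\nu_i$ as the affine contribution (identical for $i=1,2$ by hypothesis) plus $d\mu$-integrals of the two functions $s\mapsto \int [x-s]_+\,d\nu_i(x)$ and $s\mapsto \int [s-x]_+\,d\nu_i(x)$. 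Term-by-term monotonicity in $i$, coming from $(iii)$ and its dual, yields $\int f\,d\nu_1 \leq \int f\,d\nu_2$; the case $\int f\,d\nu_2=+\infty$ is vacuous, and otherwise all pieces in the decomposition are finite (the integrals $\int[x-s]_+\,d\nu_i$ are automatically finite because $\nu_i\in\mathcal{P}_1(\RR)$).

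The only technical subtlety, a mild one, sits in the integral representation itself: since $\mu$ may have infinite total mass (e.g.\ for $f(x)=x^2$), one cannot merge the two hockey-stick families into one without losing integrability at $\pm\infty$, and splitting the second-derivative measure at $0$ as above is exactly what handles this.
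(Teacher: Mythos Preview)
Your proof is correct but organized differently from the paper's sketch. The paper establishes $(i)\Leftrightarrow(ii)$ and $(ii)\Leftrightarrow(iii)$ separately: for $(ii)\Rightarrow(i)$ it reduces a general convex $f$ first to a non-negative one by subtracting a tangent line, then to a Lipschitz one by linear extrapolation outside $[-n,n]$ combined with monotone convergence, and finally to a non-decreasing one by subtracting another affine map; for $(iii)\Rightarrow(ii)$ it approximates any non-negative non-decreasing Lipschitz convex function from above by finite positive combinations $\alpha_0+\sum_i\alpha_i[x-t_i]_+$. You instead pass directly from $(iii)$ to $(i)$ via the exact integral representation of $f$ against the Lebesgue--Stieltjes measure of $f'_+$, replacing the two approximation steps by a single application of Tonelli (together with the dual hockey-stick inequality you derive from the equal-means condition). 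Both routes are standard; yours is slightly more direct and avoids limit arguments, at the modest price of managing the possibly infinite measure $\mu$, which your split at the origin handles cleanly.
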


For the reader's convenience and for the sake of completeness, we sketch the proof of this classical result. We refer to \cite{Majorization} for more details.

\begin{proof}[Sketch of the proof]
Let us show that $(i)$ is equivalent to $(ii)$. First, since the functions $x\mapsto x$ and $x\mapsto -x$ are both convex, it is clear that $\nu_1 \preceq \nu_2$ implies $\int x\,\nu_1(dx) = \int x \, \nu_2(dx)$ so that $(i)$ implies $(ii)$.
Conversely, since the graph of a convex function always lies above its tangent, subtracting an affine function if necessary, one can restrict to non-negative convex functions. 
Moreover, if $f \colon \RR \to \RR^+$ is a convex function, then $f_n\colon \RR \to \RR^+$ defined by $f_n = f$ on $[-n,n]$, $f_n(x) = f_n(n) + f'_n(n)(x-n)$ if $x\geq n$ and $f_n(x) = f_n(-n) + f'_n(-n)(x+n)$ if $x\leq -n$ 
(where $f_n'$ denotes the right derivative of $f$) is Lipschitz and converges monotonically to $f$ as $n$ goes to infinity. The monotone convergence Theorem then shows that one can further restrict to {\em Lipschitz} convex functions. Finally, up to the subtraction of an affine map, any Lipschitz convex function is non-decreasing, proving that $(ii)$ implies $(i)$.

Now it is not difficult to check that any convex, non-decreasing Lipschitz function
 $f \colon \RR \to \RR^+$ can be approached by a non-increasing sequence of functions of the form $\alpha_0+\sum_{i=1}^n \alpha_i[x-t_i]_+$, with $\alpha_i \geq0$ and $t_i \in \RR$. This shows that $(ii)$ and $(iii)$ are equivalent.
\end{proof}

The next classical result, due to Strassen \cite{Str65}, characterizes the convex ordering in terms of martingales.

\begin{The}[Strassen]\label{Strassen}
Let $\nu_1,\nu_2 \in \mathcal{P}_1(\RR)$ ; the following are equivalent:
\begin{enumerate}
\item[(i)] $\nu_1 \preceq \nu_2$,
\item[(ii)] there exists a martingale $(X,Y)$ such that $X$ has law $\nu_1$ and $Y$ has law $\nu_2$.
\end{enumerate}
\end{The}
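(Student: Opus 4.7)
The plan is to prove the two implications separately, with $(ii)\Rightarrow(i)$ being the soft direction. If $(X,Y)$ is a martingale with $X\sim\nu_1$ and $Y\sim\nu_2$, then $X=\E[Y\mid X]$, and the conditional Jensen inequality gives $f(X)\leq \E[f(Y)\mid X]$ for every convex $f$; taking expectations and using the tower property yields $\int f\,d\nu_1\leq\int f\,d\nu_2$. Note that $f(X)$ is integrable in $\RR\cup\{+\infty\}$ because $f$ is bounded below by an affine function and $\nu_1\in\mathcal{P}_1(\RR)$, so the inequality makes sense.

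For $(i)\Rightarrow(ii)$, the goal is to produce a coupling $\pi(dxdy)=p(x,dy)\,\nu_1(dx)$ whose second marginal is $\nu_2$ and which satisfies the barycenter identity $\int y\,p(x,dy)=x$ for $\nu_1$-a.e.\ $x$. I would first handle the case where $\nu_1$ and $\nu_2$ are finitely supported. In this setting, the condition $\nu_1\preceq\nu_2$ can be translated, via Proposition~\ref{equiv-conv-order}(iii) tested on the finitely many extreme atoms, into a finite system of linear inequalities; equivalently (this is the content of the Hardy--Littlewood--P\'olya/Rado theorem that the paper will recall), it asserts that each atom of $\nu_1$ lies in the convex hull of the atoms of $\nu_2$ with weights that aggregate back to $\nu_2$. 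A standard Birkhoff--von Neumann / doubly-stochastic argument then produces the kernel $p(x,dy)$ explicitly on atoms, whose rows have the prescribed barycenters, and thus a martingale coupling.

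For the general case, I would approximate by discretization: choose sequences of finitely supported measures $\nu_1^n$ and $\nu_2^n$ with $\nu_1^n\preceq\nu_2^n$, converging respectively to $\nu_1$ and $\nu_2$ in the Wasserstein distance $W_1$. A natural recipe is to quantize using the common uniform grid on $[0,1]$ through the quantile functions $F_{\nu_1}^{-1}$ and $F_{\nu_2}^{-1}$, with a small additional symmetrisation to guarantee that the convex order is preserved after discretisation (one may also use the ``shadow projection'' construction of Hirsch--Roynette). Each $\nu_1^n\preceq\nu_2^n$ yields, by the finite case, a martingale coupling $\pi^n$ whose first and second marginals are $\nu_1^n$ and $\nu_2^n$. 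The family $(\pi^n)$ is tight because both marginal families are, so up to extraction $\pi^n$ converges weakly to some probability $\pi$ with marginals $\nu_1$ and $\nu_2$.

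The main obstacle, and the only step that is not completely routine, is to pass the martingale identity $\int g(x)(y-x)\,\pi^n(dxdy)=0$ (for bounded continuous $g$) to the limit, since the test function $y-x$ is unbounded. I would dispose of this by a uniform integrability argument: $W_1$-convergence of the marginals gives $\int|x|\,d\nu_1^n\to\int|x|\,d\nu_1$ and $\int|y|\,d\nu_2^n\to\int|y|\,d\nu_2$, hence $\{|x|+|y|\}$ is uniformly integrable under $(\pi^n)$; truncating $y-x$ at height $N$ and letting $N\to\infty$ then yields $\int g(x)(y-x)\,\pi(dxdy)=0$ for every bounded continuous $g$, which is exactly the martingale property of the disintegration of $\pi$. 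This completes $(i)\Rightarrow(ii)$.
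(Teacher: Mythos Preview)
Your proposal is essentially correct and follows a classical route: conditional Jensen for $(ii)\Rightarrow(i)$, and for $(i)\Rightarrow(ii)$ a reduction to the finitely supported case followed by tightness and a uniform-integrability argument to pass the martingale identity $\int g(x)(y-x)\,\pi(dxdy)=0$ to the limit. Two spots are sketched but fillable with standard work: arranging discretizations with $\nu_1^n\preceq\nu_2^n$ (one clean recipe is to take $\nu_1^n$ as conditional expectations of $\nu_1$ on a fine partition, so that $\nu_1^n\preceq\nu_1\preceq\nu_2$, and $\nu_2^n$ a finitely supported measure with $\nu_2\preceq\nu_2^n$), and the discrete case for non-uniform weights (reduce to rational masses and a common denominator). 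One caveat on circularity: the paper proves the implication ``majorization $\Rightarrow$ doubly stochastic representation'' of Rado's theorem \emph{using} Strassen, so if you appeal to Rado you must cite an independent argument (Hardy--Littlewood--P\'olya's $T$-transforms, for instance); Birkhoff--von~Neumann alone only identifies doubly stochastic matrices with convex combinations of permutations and does not by itself produce the doubly stochastic matrix from the majorization hypothesis.

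The paper itself does not carry out a proof: it refers to \cite{GRST15} for a two-line argument based on the Kantorovich duality for the weak transport cost $\overline{\mathcal{T}}_\theta$. That route is very short once the duality machinery is available and fits naturally in the weak-transport framework the paper develops; your approach is longer but elementary and entirely self-contained, requiring no optimal-transport duality.
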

We refer to \cite{GRST15} for a (two-line) proof of Theorem~\ref{Strassen} involving 
Kantorovich duality for transport costs of the form $\overline {\mathcal{T}}$.

\subsection{Majorization of vectors and the Rado Theorem}
The convex ordering is closely related to the notion of majorization of vectors that we  recall in the following definition. As for the previous subsection, all the proofs are well-known and we state them for completeness.

\begin{Def}[Majorization of vectors]
Let $a,b \in \RR^n$ ; one says that $a$ is {\em majorized} by $b$, if the sum of the largest $j$ components of $a$ is less than or equal to the corresponding sum of $b$, for every $j$, and if the total sum of the components of both vectors are equal. 
\end{Def}

Assuming that the components of $a=(a_1,\dots,a_n)$ and $b=(b_1,\dots,b_n)$ are in non-decreasing order (\textit{i.e.}\ $a_1 \leq a_2 \leq \dots \leq a_n$ and $b_1 \leq b_2 \leq \dots \leq b_n$), $a$ is majorized by $b$, if
\[
a_n + a_{n-1} + \cdots + a_{n-j+1} \le b_n + b_{n-1} + \cdots + b_{n-j+1}, \qquad \mbox{for } j=1, \dots,n-1,
\]
and $\sum_{i=1}^n a_i = \sum_{i=1}^n b_i$.
\medskip

The next proposition recalls the link between majorization of vectors and convex ordering.

\begin{Pro}\label{maj=conv}
Let $a,b \in \RR^n$ and set $\nu_1 = \frac{1}{n}\sum_{i=1}^n \delta_{a_i}$ and $\nu_2=\frac{1}{n}\sum_{i=1}^n \delta_{b_i}$. The following are equivalent
\begin{enumerate}
\item[(i)] $a$ is majorized by $b$,
\item[(ii)] $\nu_1$ is dominated by $\nu_2$ for the convex order. In other words, for every convex $f:\RR \to \RR$, it holds that $\sum_{i=1}^n f(a_i) \le \sum_{i=1}^n f(b_i)\,.$
\end{enumerate}
\end{Pro}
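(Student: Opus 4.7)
The plan is to prove the two implications separately, relying for $(ii)\Rightarrow(i)$ on the reduction to the test functions $x\mapsto[x-t]_+$ recalled in Proposition~\ref{equiv-conv-order}, and for $(i)\Rightarrow(ii)$ on the classical Hardy--Littlewood--Pólya theorem identifying majorization with multiplication by a doubly stochastic matrix. After reordering (which changes neither $\nu_1,\nu_2$ nor the majorization condition), I may assume $a_1\leq\cdots\leq a_n$ and $b_1\leq\cdots\leq b_n$.

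For the direction $(ii)\Rightarrow(i)$, by Proposition~\ref{equiv-conv-order} the hypothesis $\nu_1\preceq\nu_2$ is equivalent to $\sum_i a_i=\sum_i b_i$ together with $\sum_i[a_i-t]_+\leq\sum_i[b_i-t]_+$ for every $t\in\RR$. The sum of all components is already the first half of the definition of majorization. For the partial sums, I would fix $j\in\{1,\dots,n-1\}$ and test with $t=b_{n-j}$: the right-hand side telescopes as $\sum_{i=n-j+1}^{n}b_i-jb_{n-j}$, while on the left I use the trivial bound $[a_i-t]_+\geq a_i-t$ restricted to the indices $i\geq n-j+1$, the remaining terms being non-negative, to obtain $\sum_{i=n-j+1}^{n}a_i-jb_{n-j}\leq\sum_i[a_i-b_{n-j}]_+$. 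Combining these three inequalities yields $\sum_{i=n-j+1}^{n}a_i\leq\sum_{i=n-j+1}^{n}b_i$, which is exactly the $j$-th majorization inequality.

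For the reverse direction $(i)\Rightarrow(ii)$, I invoke the Hardy--Littlewood--Pólya theorem: $a$ is majorized by $b$ if and only if there exists a doubly stochastic matrix $P=(P_{ij})$ such that $a_i=\sum_j P_{ij}b_j$ for all $i$. Granting this, for any convex $f:\RR\to\RR$, Jensen's inequality applied rowwise (using $\sum_j P_{ij}=1$ and $P_{ij}\geq0$) gives $f(a_i)\leq\sum_j P_{ij}f(b_j)$. Summing over $i$ and exchanging the order of summation, the column sums $\sum_i P_{ij}=1$ produce $\sum_i f(a_i)\leq\sum_j f(b_j)$, which is precisely $\int f\,d\nu_1\leq\int f\,d\nu_2$, i.e.\ $\nu_1\preceq\nu_2$.

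The only genuinely non-elementary ingredient is the Hardy--Littlewood--Pólya representation by a doubly stochastic matrix; since the proposition is stated as a reminder from \cite{Majorization}, I would simply cite it rather than reprove Birkhoff's theorem. The rest of the argument is a straightforward combination of Jensen's inequality and the reduction of convex order to the one-parameter family of test functions $[\,\cdot\,-t]_+$, so no substantial technical obstacle is expected.
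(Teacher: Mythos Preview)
Your argument for $(ii)\Rightarrow(i)$ is essentially identical to the paper's: both test the inequality $\sum_i[a_i-t]_+\leq\sum_i[b_i-t]_+$ at $t=b_k$ (your $t=b_{n-j}$ corresponds to the paper's $t=b_k$ after a change of index), bound the left-hand side below by dropping the positive part on the top indices, and read off the partial-sum inequality.

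For $(i)\Rightarrow(ii)$ you take a genuinely different route. The paper gives a direct, elementary argument: assuming majorization, it verifies $\sum_k[a_k-t]_+\leq\sum_k[b_k-t]_+$ for every $t$ (by locating the first index $k_o$ with $a_{k_o}\geq t$, using the majorization inequality for that tail, and then bounding $b_k-t\leq[b_k-t]_+$), and then invokes Proposition~\ref{equiv-conv-order}. You instead invoke the Hardy--Littlewood--P\'olya/Rado representation of a majorized vector as $a=Pb$ with $P$ doubly stochastic, and conclude by Jensen. Your proof is mathematically correct, but note that within the paper's own logical structure it would be circular: the paper states this representation as Theorem~\ref{Rado} \emph{after} Proposition~\ref{maj=conv}, and its proof of $(i)\Rightarrow(ii)$ in Theorem~\ref{Rado} explicitly uses Proposition~\ref{maj=conv} (together with Strassen's theorem). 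So if you want to fit the paper's internal order, you should give the direct argument instead; if you are content to cite the classical HLP theorem from~\cite{Majorization} as an external black box (which does have independent proofs via $T$-transforms), your route is fine and slightly shorter.
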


Thanks to the above proposition and with a slight abuse of notation, in the sequel we will also 
write $a \preceq b$ when $a$ is majorized by $b$.

\begin{proof}
Assume without loss of generality that the components of $a$ and $b$ are sorted in increasing order.
We observe first that, by construction, the equality $\int x \nu_1(dx) = \int x \nu_2(x)$ is equivalent to
$\sum_{i=1}^n a_i = \sum_{i=1}^n b_i$. 

We will first prove that $(i)$ implies $(ii)$. 
By Item $(iii)$ of Proposition \eqref{equiv-conv-order} we only need to prove that $a \preceq b$ implies 
\begin{equation} \label{samedi}
\sum_{k=1}^n [a_k - t]_+ \leq \sum_{k=1}^n [b_k-t]_+, \qquad \forall t \in \RR.
\end{equation}
Assume that $t \leq \max a_k$ (otherwise \eqref{samedi} obviously holds).
Then, let $k_o$ be the smallest $k$ such that $a_k \geq t$ so that $\sum_{k=1}^n [a_k - t]_+  = \sum_{k=k_o}^n (a_k - t)$.
Therefore, by the majorization assumption (which guarantees that $\sum_{k=k_o}^n a_k \leq \sum_{k=k_o}^n b_k$), we get
\begin{align*}
\sum_{k=1}^n [a_k - t]_+  = \sum_{k=k_o}^n (a_k - t) 
 \leq \sum_{k=k_o}^n b_k -t
 \leq \sum_{k=1}^n [b_k-t]_+.
\end{align*}
Conversely, let us prove that $(ii)$ implies $(i)$. Fix $k \in \{1,\ldots,n\}$ and set $f_k(x): =[x-b_k]_+$, $x \in \RR$. Plugging $f_k$ into Item $(ii)$ of Proposition \eqref{equiv-conv-order} leads to
\[
\sum_{i=k}^n a_i-b_k \leq \sum_{i=1}^n [a_i-b_k]_+ = n \!\int\! f(x) \nu_1(dx) \leq n \!\int\! f(x) \nu_2(dx) = \sum_{i=1}^n [b_i-b_k]_+ = \sum_{i=k}^n b_i-b_k,
\]
so that $\sum_{i=k}^n a_i \leq \sum_{i=k}^n b_i$, which proves that $a$ is majorized by $b$.
\end{proof}

Next we recall a simple classical consequence of Proposition \ref{maj=conv} in terms of discrete optimal transport on the line.

\begin{Pro}\label{OTdim1}
Let $x,y \in \RR^n$ be two vectors whose coordinates are listed in non-decreasing order (\textit{i.e.}\ 
$x_1 \leq x_2 \leq \dots\leq x_n$, $y_1 \leq y_2 \leq \dots \leq y_n$).
Then for all permutation $\sigma$ of $\{1,\ldots,n\}$ and all convex $\theta : \RR \to \RR$, it holds
\[
\sum_{i=1}^n \theta (x_i-y_i) \leq \sum_{i=1}^n \theta(x_i - y_{\sigma(i)}).
\]
\end{Pro}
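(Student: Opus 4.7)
The plan is to reduce the inequality to a single two-point swap via a bubble-sort style argument. Any permutation $\sigma$ can be transformed into the identity by a finite sequence of transpositions of \emph{adjacent} indices that each strictly decrease the number of inversions. Concretely, if $\sigma$ is not the identity there exist indices $i < i+1$ with $\sigma(i) > \sigma(i+1)$; composing $\sigma$ with the transposition $(i,i+1)$ gives a permutation with one fewer inversion. Iterating, one builds a chain $\sigma = \sigma_0, \sigma_1, \ldots, \sigma_N = \mathrm{id}$ where each $\sigma_{r+1}$ differs from $\sigma_r$ by a swap of two values $\sigma_r(i), \sigma_r(i+1)$ that appear in the wrong order. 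Thus it is enough to show that each such swap can only decrease $\sum_i \theta(x_i - y_{\sigma_r(i)})$.

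A swap of this type boils down to the following \textbf{four-point inequality}: for real numbers $a \leq b$ and $c \leq d$ and any convex $\theta$,
\[
\theta(a-c) + \theta(b-d) \;\leq\; \theta(a-d) + \theta(b-c).
\]
Indeed, with $a = x_i$, $b = x_{i+1}$, $c = y_{\sigma_{r+1}(i)}$, $d = y_{\sigma_{r+1}(i+1)} = y_{\sigma_r(i)}$ (so that $c \leq d$ by monotonicity of $y$) this is exactly the statement that the sum increases when one switches from the sorted pairing to the unsorted one. To verify the four-point inequality, set $u := a-d$ and $v := b-c$, so that $u \leq v$, and observe that the two quantities $p := a-c$ and $q := b-d$ satisfy $p + q = u + v$ as well as $u \leq p \leq v$ and $u \leq q \leq v$. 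Writing $p = \lambda u + (1-\lambda) v$ for some $\lambda \in [0,1]$ forces $q = (1-\lambda) u + \lambda v$, and convexity of $\theta$ yields
\[
\theta(p) + \theta(q) \leq \bigl(\lambda + (1-\lambda)\bigr)\theta(u) + \bigl((1-\lambda) + \lambda\bigr)\theta(v) = \theta(u) + \theta(v),
\]
which is the desired inequality.

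No step here is a real obstacle: the bubble-sort decomposition is elementary, and the four-point inequality is a direct consequence of convexity once one parametrizes the endpoints correctly. The only care required is to make sure the adjacent-swap reduction is applied in a direction that decreases (rather than increases) the sum, which is handled automatically by choosing each swap to reduce the number of inversions of $\sigma$. As an alternative route one could invoke Proposition~\ref{maj=conv} directly by showing that the vector $(x_i - y_i)_{i=1}^n$ is majorized by $(x_i - y_{\sigma(i)})_{i=1}^n$, but verifying the majorization condition essentially amounts to the same rearrangement argument, so the transposition proof is the most economical.
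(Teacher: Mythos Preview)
Your proof is correct. The reduction to adjacent transpositions is standard, and the four-point convexity lemma is cleanly established via the convex-combination parametrisation $p=\lambda u+(1-\lambda)v$, $q=(1-\lambda)u+\lambda v$.

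The paper takes precisely the alternative route you flag at the end: it observes that $\sum_{i=k}^n y_i \geq \sum_{i=k}^n y_{\sigma(i)}$ for every $k$ (the left-hand side being the sum of the $n-k+1$ largest $y$-values), deduces $\sum_{i=k}^n (x_i-y_i) \leq \sum_{i=k}^n (x_i-y_{\sigma(i)})$, concludes $x-y \preceq x-y_\sigma$, and invokes Proposition~\ref{maj=conv}. So, contrary to your closing remark, the paper's verification of majorization is a one-line partial-sum observation, not a disguised rearrangement argument. That said, the passage from these \emph{index-wise} tail sums to genuine majorization is not a general implication (neither $x-y$ nor $x-y_\sigma$ need be sorted), so the paper is terse at exactly that point; your transposition proof sidesteps the issue entirely. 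The trade-off is the usual one: the paper's route is shorter given the majorization machinery already set up in Section~\ref{ordering}, while yours is a little longer but fully elementary and self-contained.
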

\begin{proof}
Since, for all $k$, $\sum_{i=k}^n y_i \geq \sum_{i=k}^n y_{\sigma(i)}$, it holds for $\sum_{i=k}^n (x_i - y_i) \leq \sum_{i=k}^n (x_i - y_{\sigma(i)})$ (with equality for $k=1$). Therefore, denoting $y_{\sigma}=(y_{\sigma(1)},\ldots,y_{\sigma(n)})$, it holds $x-y \preceq x - y_{\sigma}.$ Applying Proposition \ref{maj=conv} completes the proof.
\end{proof}

\begin{Rem}\label{RemOT}In particular, let $\mu,\nu$ are two discrete probability measures on $\RR$ of the form 
\[
\mu= \frac{1}{n}\sum_{i=1}^n \delta_{x_i}\qquad \text{and} \qquad \nu = \frac{1}{n}\sum_{i=1}^n \delta_{y_i},
\]
where the $x_i$'s and the $y_i$'s are in increasing order, and assume for simplicity that the $x_i$'s
are distinct. Then the map $T$ sending $x_i$ on $y_i$ for all $i$ realizes the optimal transport of $\mu$ onto $\nu$ for every cost function $\theta.$
\end{Rem}
\medskip

We end this section with a characterization of the convex ordering (or equivalently of the majorization of vectors, thanks to Proposition \ref{maj=conv}), due to Rado \cite{Rad52}. We may give a proof based on Strassen's Theorem. 
For simplicity, we denote by $\mathcal{S}_n$ the set of all permutations of $\{1,2, \ldots,n\}$ and, given $\sigma \in \mathcal{S}_n$
and $x=(x_1,\dots,x_n) \in \RR^n$, we set $x_\sigma:=(x_{\sigma(1)},\dots,x_{\sigma(n)})$.
\begin{The}[Rado]\label{Rado}
Let $a,b \in \RR^n$ ; the following are equivalent
\begin{enumerate}
\item[(i)] the vector $a$ is majorized by $b$;
\item[(ii)] there exists a doubly stochastic matrix $P$ such that $a=bP$; %(treating $a$ and $b$ as row vectors),
\item[(iii)] there exists a collection of non-negative numbers $(\lambda_\sigma)_{\sigma \in S_n}$ with $\sum_{\sigma \in \mathcal{S}_n} \lambda_\sigma = 1$ such that $a= \sum_{\sigma \in \mathcal{S}_n} \lambda_\sigma b_\sigma$ 
(in other words $a$ lies in the convex hull of the permutations of $b$).
\end{enumerate}
\end{The}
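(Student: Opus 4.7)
The plan is to establish the cyclic chain $(\mathrm{iii}) \Rightarrow (\mathrm{ii}) \Rightarrow (\mathrm{i})$ and close it with $(\mathrm{i}) \Rightarrow (\mathrm{ii}) \Rightarrow (\mathrm{iii})$. Only the implication $(\mathrm{i}) \Rightarrow (\mathrm{ii})$ is substantive and is the one obtained via Strassen's Theorem~\ref{Strassen}; the equivalence $(\mathrm{ii}) \Leftrightarrow (\mathrm{iii})$ is the Birkhoff--von Neumann theorem, and $(\mathrm{ii}) \Rightarrow (\mathrm{i})$ is a one-line application of Jensen's inequality.

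For the easy implications, $(\mathrm{iii}) \Rightarrow (\mathrm{ii})$ is immediate since every permutation matrix is doubly stochastic and that set is convex, and $(\mathrm{ii}) \Rightarrow (\mathrm{iii})$ is exactly the content of Birkhoff--von Neumann. For $(\mathrm{ii}) \Rightarrow (\mathrm{i})$, given $a = bP$ with $P$ doubly stochastic, Jensen's inequality applied column by column (using that each column of $P$ sums to $1$) together with the row sum condition yields, for every convex $f$,
\[
\sum_j f(a_j) = \sum_j f\!\left(\sum_i b_i P_{ij}\right) \leq \sum_{i,j} f(b_i) P_{ij} = \sum_i f(b_i),
\]
so Proposition~\ref{maj=conv} gives $a \preceq b$.

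The heart of the proof is $(\mathrm{i}) \Rightarrow (\mathrm{ii})$. Given $a \preceq b$, I would form the empirical measures $\nu_1 = \frac{1}{n}\sum_i \delta_{a_i}$ and $\nu_2 = \frac{1}{n}\sum_i \delta_{b_i}$; Proposition~\ref{maj=conv} gives $\nu_1 \preceq \nu_2$, and Theorem~\ref{Strassen} produces a martingale $(X,Y)$ with $X \sim \nu_1$, $Y \sim \nu_2$, whose joint law I denote by $\pi$. The guiding idea is that, after rescaling by $n$, the two uniform marginals of $\pi$ become the row and column sum conditions defining a doubly stochastic matrix, and the martingale identity $\E[Y\mid X]=X$ becomes exactly the equation $a=bP$. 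The natural candidate is therefore $P_{ji} = n\,\pi(\{(a_i,b_j)\})$.

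The main obstacle is a bookkeeping issue when the $a_i$'s or the $b_j$'s have repetitions, because $\pi$ only records joint mass at pairs of \emph{distinct} values. If $\{u_k\}$ and $\{v_\ell\}$ denote the distinct values of $a$ and $b$, occurring with multiplicities $m_k$ and $n_\ell$ respectively, I would set
\[
P_{ji} = \frac{n\,\pi(\{(u_k,v_\ell)\})}{m_k n_\ell} \qquad \text{whenever } a_i = u_k \text{ and } b_j = v_\ell,
\]
i.e.\ spread the joint mass uniformly across the indices sharing each repeated value. A direct verification using the marginal identities $\sum_\ell \pi(\{(u_k,v_\ell)\}) = m_k/n$ and $\sum_k \pi(\{(u_k,v_\ell)\}) = n_\ell/n$, together with the martingale (barycenter) identity $\sum_\ell v_\ell\,\pi(\{(u_k,v_\ell)\}) = u_k m_k/n$, then confirms that $P$ is doubly stochastic and satisfies $a = bP$, closing the loop.
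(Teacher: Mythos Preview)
Your proof is correct and follows essentially the same approach as the paper: both use Birkhoff's theorem for $(ii)\Leftrightarrow(iii)$, Jensen for $(ii)\Rightarrow(i)$, and Strassen's theorem for the key step $(i)\Rightarrow(ii)$, constructing $P$ by distributing the joint mass $\pi(\{(u_k,v_\ell)\})$ uniformly over indices sharing the same value via the formula $P_{ji}=n\,\pi(\{(u_k,v_\ell)\})/(m_k n_\ell)$ (which the paper writes as $K_{y,x}=n\,\P(X=x,Y=y)/(k_x\ell_y)$). The only difference is cosmetic---you spell out the doubly-stochastic and barycenter verifications a bit more explicitly than the paper does.
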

\proof
First we will prove that $(i)$ implies $(ii)$. According to Proposition \ref{maj=conv}, $a\preceq b$ is equivalent to saying that $\nu_1 = \frac{1}{n}\sum_{i=1}^n \delta_{a_i}$ is dominated by $\nu_2 = \frac{1}{n} \sum_{i=1}^n \delta_{b_i}$ in the convex order. Set $\mathcal{X}:=\{a_1,\ldots,a_n\}$, $\mathcal{Y}:=\{b_1,\ldots,b_n\}$,
$k_x:=\# \{ i \in \{1,\ldots,n\} : a_i =x\}$, $x \in \mathcal{X}$ and $\ell_y=\#\{i \in \{1,\ldots,n\} : b_i = y\}$, $y \in \mathcal{Y}$ (where $\#$ denotes the cardinality);  observe that
 $\nu_1 = \frac{1}{n}\sum_{x \in \mathcal{X}} k_x \delta_x$ and $\nu_2=\frac{1}{n}\sum_{y \in \mathcal{Y}} \ell_y \delta_y$.
According to the Strassen Theorem (Theorem~\ref{Strassen}), there exists a couple of random variables $(X,Y)$ on some probability space $(\Omega,\mathcal{A},\P)$ such that $X$ is distributed according to $\nu_1$, and $Y$ according to $\nu_2$ and $X = \E[Y|X].$
Since $X$ is a discrete random variable, 
\[
\E[Y|X] = \sum_{x \in \mathcal{X}} \frac{\E[Y \mathbf{1}_{X=x}]}{\P(X=x)}\mathbf{1}_{X=x},\qquad \text{a.s.}
\] 
Therefore, for all $x \in \mathcal{X}$,
\[
x = \frac{\E[Y \mathbf{1}_{X=x}]}{\P(X=x)} = \sum_{y \in \mathcal{Y}} \ell_y yK_{y,x},
\]
where  $K_{y,x} := n\frac{\P(X=x, Y=y)}{k_x\ell_y}.$ Hence $a=bP$ with $P_{j,i} := K_{b_j,a_i}$, $i,j=1,\dots,n$.
This proves Item $(ii)$, since $P$ is doubly stochastic by construction.

If $a=bP$ with a doubly stochastic matrix $P$, then it is easily checked that $\sum_{i=1}^n f(a_i) \leq \sum_{i=1}^n f(b_i)$ for 
any convex function $f$ on $\RR$  so that $(ii)$ implies $(i)$.

Finally, according to Birkhoff's theorem, the extremes points of the set of doubly stochastic matrices are \emph{permutation matrices}. Therefore every doubly stochastic matrix can be written as a convex combination of permutation matrices showing that $(ii)$ and $(iii)$ are equivalent.
\endproof

\subsection{Geometric aspects of convex ordering and a majorization theorem}\label{sec:geomaspect}

Contrary to the previous subsections, the results presented here are new.
Fix some vector $b = (b_1,b_2, \ldots ,b_n)$ of  $\RR^n$  with {\em distinct} components (for simplicity).
We will be working with the convex hull of the permutations of $b$, a polytope we denote by 
$\mathrm{Perm}(b)$ and defined as
\[
\mathrm{Perm}(b):= \left\{ \sum_{\sigma \in \mathcal{S}_n} \lambda_\sigma b_\sigma,  \mbox{with } \lambda_\sigma \geq 0 \mbox{ and } \sum_{\sigma \in S_n} \lambda_\sigma = 1 \right\} .
\]
Such a polytope is often refered to as the \emph{Permutahedron} generated by $b$. According to Rado's Theorem \ref{Rado}, $\mathrm{Perm}(b)=\{a \in \RR^n : a \preceq b\}.$
Hence, $\mathrm{Perm}(b)$ is a subset of the following affine hyperplane
\[
\mathcal{E}_b:=\left\{x \in \RR^n : \sum_{i=1}^n x_i=\sum_{i=1}^n b_i\right\} = b + \mathcal{E}_0,
\]
with $\mathcal{E}_0 := \{x \in \RR^n : \sum_{i=1}^n x_i =0\}.$

We will be interested in the faces, facets containing a given face, and normal vectors to such facets of $\mathrm{Perm}(b)$. We need to introduce some notations.

Denote by $[n]$ the set of integers from $1$ to $n$. For $S\subset [n]$, let $v_S(b)$ denote the vector with
the $|S|$ largest components of $b$ in the positions indexed by $S$ (in decreasing order, say), and the remaining $n-|S|$ lowest components of $b$ in the other positions indexed by $[n]\setminus S$ (also in a decreasing order).
Also, when $S \neq \emptyset$, we denote by $P_S(b)$ the set that contains the vector $v_S(b)$ along with all vectors obtained by permuting any subset of coordinates of $v_S(b)$, as long as the subset is contained in $S$ or  in $[n]\setminus S$. (That is, the only permutations that are {\em not} allowed are those that involve elements from {\em both} $S$ and $[n]\setminus S$). More precisely
$$
P_S(b) : = \{ (v_S(b))_\sigma, \sigma \in \mathcal{S}_n \mbox { such that } \sigma(S) = S \} 
$$
where $\sigma(\mathcal{S}):=\{\sigma(i), i \in \mathcal{S}\}$ denotes the image of $\mathcal{S}$ by $\sigma$.

More generally, given a partition ${\mathcal{S}} = (S_1, S_2, \ldots , S_k)$ of $[n]$, let $v_{\mathcal{S}}(b)$ denote the vector with the largest $|S_1|$ coordinates of $b$ in the positions indexed by $S_1$ (in decreasing order), then the next largest $|S_2|$ coordinates in the positions indexed by $S_2$ and so on (as an illustration, for $b=(1,4,5,-2,3,9,6,-5) \in \mathbb{R}^8$ and ${\mathcal{S}} = (S_1, S_2, S_3)$ with
$S_1=\{1,2\}$, $S_2=\{3,6,7\}$ and $S_3=\{4,5,8\}$, we get 
$v_{\mathcal{S}}(b)=(\mathit{9},\mathit{6},\hm{5}, 1,-2 ,\hm{4}, \hm{3}, -5)$ where the italic positions refer to the set $S_1$, the bold positions to the set $S_2$ and the remaining positions to $S_3$).
Also, we denote by $P_{\mathcal{S}}(b)$ the set containing the vector $v_{\mathcal{S}}(b)$ along with all vectors obtained by permuting the coordinates of $v_{\mathcal{S}}(b)$ that belong to the same $S_i$:
$$
P_{\mathcal{S}}(b) : = \{ (v_S(b))_\sigma, \sigma \in \mathcal{S}_n \mbox { such that for all }i, \sigma(S_i) = S_i  \}.
$$

Now we recall two geometric definitions/facts from \cite{Matroids}.

\begin{itemize}
\item[Fact 1:] A {\em facet} of $\mathrm{Perm}(b)$ is the convex hull of $P_S(b)$, for some $S \neq \emptyset,  [n]$.

\item[Fact 2:] A {\em face} of $\mathrm{Perm}(b)$ is the convex hull of $P_{\mathcal{S}}(b)$, for some partition $\mathcal{S} = (S_1, S_2, \ldots , S_k)$ of $[n]$  with $k \geq 3$.  
Furthermore, given a face $F = \mathrm{Conv} ( P_{\mathcal{S}}(b) )$, there exist exactly $k-1$ facets 
containing $F$ that are obtained by coalescing the first and last several $S_i$'s in $\mathcal{S}$: that is, for each $1\le j\le k-1$,  the facet $F_j$ containing $F$ can be described by taking the partition $[n] = T_1 \cup T_2$
with $T_1 = S_1 \cup \cdots \cup S_j$, and $T_2 = S_{j+1} \cup \cdots \cup S_k$.
\end{itemize}

The next theorem, which we may call the Majorization Theorem, is a key ingredient in the proof of Theorem \ref{The:OT}.
It provides a geometric interpretation of majorization in terms of projection.

\begin{The}[Majorization Theorem]\label{goodprop}
Let $a,b \in \RR^n$, assume that $b$ has distinct coordinates and that $a \notin \mathrm{Perm}(b)$. Then the following are equivalent:
\begin{itemize}
\item[(i)] $\hat{c} \in \mathrm{Perm}(b)$ satisfies
\[ a-\hat{c} \preceq a-c , \ \ \ \forall c \in \mathrm{Perm}(b)\,;\]

\item[(ii)] $\hat{c}$ is the closest point of $\mathrm{Perm}(b)$ to $a$; that is,
\[\hat{c}:=\arg\min_{c \in \mathrm{Perm}(b)}(\|a-c\|_2)\,.\]
\end{itemize}
Moreover the vector $\hat{c}$ is sorted as $a$ : $(a_i \leq a_j) \Rightarrow (\hat{c}_i \leq \hat{c}_j)$ , for all $i,j$.
\end{The}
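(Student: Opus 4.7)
The plan is to handle the two implications separately, with the easy one first. For $(i) \Rightarrow (ii)$, applying Proposition~\ref{maj=conv} with $f(x) = x^2$ to the majorization $a - \hat c \preceq a - c$ immediately yields $\|a - \hat c\|_2^2 \leq \|a - c\|_2^2$ for every $c \in \mathrm{Perm}(b)$, so (ii) holds; strict convexity of the Euclidean norm on the compact convex set $\mathrm{Perm}(b)$ ensures moreover that the minimizer in (ii) is unique.

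The harder direction $(ii) \Rightarrow (i)$ is where the work lies. My plan is to extract, from the first-order optimality of the Euclidean projection combined with Facts~1--2, enough structure on the residual $w := a - \hat c$ to upgrade the $L^2$ condition into majorization against every convex test function. Let $F$ be the smallest face of $\mathrm{Perm}(b)$ containing $\hat c$, so by Fact~2 there is a partition $\mathcal{S} = (S_1, \ldots, S_k)$ of $[n]$ with $F = \mathrm{Conv}(P_\mathcal{S}(b))$ and $\hat c$ in the relative interior; set $U_\ell := S_1 \cup \cdots \cup S_\ell$. A direct inspection of $P_\mathcal{S}(b)$ shows that the tangent space to the affine hull of $F$ is $\{d \in \RR^n : \sum_{i \in S_\ell} d_i = 0 \text{ for all } \ell\}$, and since $\hat c$ is also the Euclidean projection of $a$ onto this affine hull, orthogonality forces $w$ to be constant on each block: $w_i = \alpha_\ell$ for every $i \in S_\ell$. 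First-order optimality at $\hat c$ further requires $w$ to lie in the outer normal cone to $\mathrm{Perm}(b)$ at $\hat c$, which by Fact~2 is generated---modulo $\mathbf{1}$---by the facet outer normals $\mathbf{1}_{U_\ell}$ ($1 \leq \ell \leq k-1$) with nonnegative coefficients; identifying coefficients yields $\alpha_1 \geq \alpha_2 \geq \cdots \geq \alpha_k$.

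With block-constancy of $w$ and monotonicity of the $\alpha_\ell$'s secured, I would conclude using Proposition~\ref{equiv-conv-order}(iii). Since $\mathrm{Perm}(b) \subset \mathcal{E}_b$, the equal-means condition is automatic, so it suffices to prove
\[
\sum_{i=1}^n [w_i - t]_+ \leq \sum_{i=1}^n [(a_i - c_i) - t]_+ \qquad \forall t \in \RR.
\]
Letting $\ell^*$ be the largest index with $\alpha_{\ell^*} > t$ (or $\ell^* = 0$ if none), block-constancy turns the left-hand side into $\sum_{i \in U_{\ell^*}} w_i - |U_{\ell^*}| t$, while $[x]_+ \geq x$ restricted to $U_{\ell^*}$ bounds the right-hand side from below by $\sum_{i \in U_{\ell^*}} (a_i - c_i) - |U_{\ell^*}| t$; the facet inequality $\sum_{i \in U_{\ell^*}} c_i \leq \sum_{i \in U_{\ell^*}} \hat c_i$ (saturated at $\hat c$ and valid for every $c \in \mathrm{Perm}(b)$) closes the chain. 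The ``moreover'' assertion falls out from $\hat c_i = a_i - \alpha_\ell$ on $S_\ell$ together with $\alpha_1 \geq \cdots \geq \alpha_k$, which force both within-block and across-block orderings of $\hat c$ to agree with those of $a$.

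The main obstacle, as I see it, is the geometric step producing both block-constancy and the monotone ordering of the $\alpha_\ell$'s simultaneously: each alone is insufficient, and it is precisely their conjunction that lifts a mere $L^2$ minimization to majorization against every convex cost.
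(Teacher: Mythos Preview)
Your argument is correct and follows the same architecture as the paper's proof: the easy direction via $f(x)=x^2$, and the hard direction via the face/normal-cone structure of the permutohedron, establishing that the residual $w=a-\hat c$ is constant on the blocks $S_\ell$ with monotone values $\alpha_1\geq\cdots\geq\alpha_k$.

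The one genuine difference is in how you close the majorization. The paper tests against an arbitrary convex $f$, writes $\sum_i f(\alpha_i-d_i)\geq \sum_j |S_j|f(A_j)-\sum_j f'(A_j)D_j$, and uses an Abel transform on $\sum_j f'(A_j)D_j$ together with the polar-cone inequalities $E_j\leq 0$. You instead invoke Proposition~\ref{equiv-conv-order}(iii), test only against $[x-t]_+$, and reduce everything to the single facet inequality $\sum_{i\in U_{\ell^*}}c_i\leq \sum_{i\in U_{\ell^*}}\hat c_i$. This is a clean shortcut: it replaces the Abel-summation computation by the observation that the extremal convex test functions $[x-t]_+$ interact with the block-constant structure of $w$ in the simplest possible way.

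One small point on the ``moreover'' clause: the two ingredients you cite ($\hat c_i=a_i-\alpha_\ell$ on $S_\ell$ and $\alpha_1\geq\cdots\geq\alpha_k$) do give the within-block ordering and the across-block case $\ell<m$, but for $i\in S_\ell$, $j\in S_m$ with $\ell>m$ and $a_i\leq a_j$ they are not by themselves conclusive; you also need the ``tier'' property of the face $F=\mathrm{Conv}(P_{\mathcal S}(b))$ (every coordinate of $\hat c$ indexed by $S_m$ dominates every coordinate indexed by $S_\ell$ when $m<\ell$), which is available from your setup but not stated. The paper sidesteps this by proving the ordering first, directly from permutation invariance of $\mathrm{Perm}(b)$ and Proposition~\ref{OTdim1}.
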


Let us recall that the orthogonal projection of a point $a$ on the polytope $\mathrm{Perm}(b)$ is the unique $\bar{c} \in \mathrm{Perm} (b)$ such that
\begin{equation}\label{eq:proj}
\langle a-\bar{c}, c - \bar{c}\rangle \leq 0,\qquad \forall c \in \mathrm{Perm}(b).
\end{equation}

\begin{proof} 
Observe that if $\sum_{i=1}^n a_i \neq \sum_{i=1}^n b_i$, then letting 
 $\tilde{a}: = a - \frac{k}{n} (1,1,\ldots,1)$ with $k:= \sum_{i=1}^n a_i -\sum_{i=1}^n b_i$, we see (using \eqref{eq:proj}) that the orthogonal projection of $a$ and $\tilde{a}$ on $\mathrm{Perm}(b)$ are equal (to some point we denote by $\hat{c}$, say), and that $a-\hat{c} \preceq a-c$  if and only if $\tilde{a}-\hat{c} \preceq \tilde{a}-c$.
Therefore we can assume without loss that $a$ and $b$ are such that $\sum_{i=1}^n a_i = \sum_{i=1}^n b_i$. 

We will first prove that $(i)$ implies $(ii)$ which is the easy part of the proof.

\smallskip

\noindent $(i) \implies (ii)$.
Let $\bar{c}$ be the closest point of $\mathrm{Perm}(b)$ to $a$ (\textit{i.e.}\ $\bar{c}:=\arg\min_{c \in \mathrm{Perm}(b)}(\|a-c\|_2)$). Then by $(i)$, $a-\hat{c} \preceq a-\bar{c}$, which, by Proposition~\ref{maj=conv} (applied to $f(x)=x^2$) 
implies that $\sum_{i=1}^n(a-\hat{c})_i^2 \le  \sum_{i=1}^n (a-\bar{c})_i^2$. By definition of $\bar{c}$, this is possible only if $\hat{c} = \bar{c}$.

\medskip

Next we will prove that $(ii)$ implies $(i)$. For the sake of clarity, we first deal with the simple case when 
$\hat{c}$ lies on a {\em facet} of $\mathrm{Perm}(b)$, before dealing with the general case of $\hat{c}$ being on a face.

\smallskip

\noindent $(ii) \implies (i)$.
Let $\hat{c}$ be the closest point of $\mathrm{Perm}(b)$ to $a$. Since $\mathrm{Perm}(b)$ is invariant by permutation, it easily follows from 
Proposition \ref{OTdim1} that the coordinates of $\hat{c}$ are in the same order as the coordinates of $a$. 
Hence, we are left with the proof that $a- \hat{c} \preceq a-c$ for all $c \in \mathrm{Perm}(b)$.

\noindent
{\bf (a) A simple case: $\hat{c} \in F$ for some facet $F$}. %Assume that $\hat{c} \in F$ for some facet $F$ of $\mathrm{Perm}(b)$.
Since $\hat{c}$ is chosen from $\mathrm{Perm}(b)$, and since we assumed  that
$\sum_i b_i = \sum_i a_i$, we have $\sum_i (a-\hat{c})_i = 0$.
Writing $\alpha:=a-\hat{c} \in \mathcal{E}_0$, suppose that $\alpha$ is perpendicular to the affine subspace $\mathcal{H}:=\mathcal{H}_F$ containing a facet $F$, defined by some nonempty subset $S$ of $[n]$. For all $x,y \in F$, we thus have $\langle \alpha, x-y \rangle =0$. Choosing $x=v_S(b)$ and $y=x_{\tau_{ij}}$ obtained by permuting two coordinates of $x$ whose indices are both in $S$ or both in $S^c$ (\textit{i.e.}\ $\tau_{ij}=(ij)$ is the transposition that permutes $i$ and $j$, with $i,j \in S$, or $i,j \in S^c$), one sees that the coordinates of $\alpha$ are constant on $S$ and $S^c$. We denote by $\alpha_S$ and $\alpha_{S^c}$ the values of $\alpha$ on these sets, which verify $k\alpha_S + (n-k)\alpha_{S^c}=0$ 
since $\alpha \in \mathcal{E}_0$.

Now (recalling that $\alpha=a -\hat{c}$) our task is to show  that
\begin{equation*}
\alpha \preceq  \alpha - (c' - \hat{c}),\ \mbox{ for every $c' \in \mathrm{Perm}(b)$}\,.
\end{equation*}
This amounts to showing that
\begin{equation*}%\label{major_ineq}
\alpha \preceq \alpha - c,\ \mbox{ for every $c$ such that }
\langle\alpha, c\rangle \leq 0, \ \mbox{ and }  \sum_i c_i = 0\,.
\end{equation*}
Indeed, on the one hand the choice of $\hat{c}$ implies, by \eqref{eq:proj}, that for every $c' \in \mathrm{Perm}(b)$
$\langle\alpha, c' - \hat{c}\rangle \ \le 0$, and on the other hand, since $\hat{c}, c' \in \mathrm{Perm}(b)$, necessarily $\sum_i c_i = \sum_i c'_i -  \sum_i  \hat{c}_i = 0$.

Now $\langle\alpha, c\rangle \leq 0$ and $\sum_i c_i = 0$ together imply (recall that $\alpha$ is constant on $S$ and $S^c$) that
\[
(\alpha_S - \alpha_{S^c})\sum_{i \in S} c_i \leq 0\,.
\]
Let us assume that $\alpha_S >\alpha_{S^c}$. Then denoting by $c_S = \sum_{i\in S}c_i$ and by $c_{S^c}= \sum_{i\in S^c} c_i$, one has $c_S\leq 0$ and $c_{S^c} \geq0.$
Therefore, for any convex function $f$ on $\RR$, according to Jensen's inequality and by convexity, we get
\begin{align*}
\sum_{i=1}^n f(\alpha_i-c_i) &= k \frac{\sum_{i\in S} f(\alpha_S - c_i)}{k} + (n-k) \frac{\sum_{i\in S^c}f(\alpha_{S^c} - c_i) }{n-k}\\
& \geq k f\left(\alpha_S - \frac{c_S}{k}\right) + (n-k)f\left(\alpha_{S^c}- \frac{c_{S^c}}{n-k}\right)\\
& \geq kf(\alpha_S)+(n-k) f(\alpha_{S^c}) -f'(\alpha_S)c_S -f'(\alpha_{S^c})c_{S^c} \\
& \geq \sum_{i=1}^n f(\alpha_i),
\end{align*}
where the last inequality comes from the fact that 
$f'(\alpha_S)c_S + f'(\alpha_S^c)c_{S^c} = c_S(f'(\alpha_S) - f'(\alpha_{S^c})) \leq 0$.
According to Proposition \ref{maj=conv}, we conclude that $\alpha \preceq \alpha - c$ which is the expected result.

\

\noindent
{\bf (b) The general case.} Suppose that $\hat{c}$ lies in a face $F$ of the polytope. This face is related to a partition $\mathcal{S} = (S_1,\ldots,S_k)$ of $[n]$, with $k \geq 3$. Then $\alpha:=a-\hat{c} \in N(F)$, where $N(F)$ denotes the normal cone of $F$.  Recall that the extreme rays of $N(F)$ are given by the facet directions for the facets containing $F$. For all $i \in \{1,\ldots,n-1\}$, let us denote by $F_i$ the facet containing $F$ associated to the partition $\mathcal{T}_i=\{S_1\cup\ldots \cup S_{i} \,;\,  S_{i+1}\cup\ldots\cup S_k\}$, $1\leq i\leq k-1.$  Consider the vectors $p_1, p_2, \ldots , p_{k-1} \in \mathcal{E}_0$  defined by
\[
p_i = \bone_{S_1 \cup S_2 \cup \cdots \cup S_i} - \frac{k_i}{n}\bone_{[n]}
\]
where $\bone_T$ denotes the $0-1$ indicator vector of $T$, for $T\subseteq [n]$, and $k_i = |S_1|+\cdots+|S_i|$.
For each $i$, the vector $p_i$ is orthogonal to the facet $F_i$. Moreover, for all $c \in \mathrm{Perm}(b)$ one may check that $ \langle c,p_i \rangle \leq \langle v_{\mathcal{T}_i},p_i \rangle$, with equality on $F_i$. This shows that $p_i$ is an outward normal vector to $F_i$. Therefore $N(F)$ is the conical hull of the $p_i$'s, and so we may express $\alpha$, for a suitable choice of  $\lambda_i \ge 0$, as:

\[\alpha = \sum_i \lambda_i \bone_{S_1 \cup S_2 \cup \cdots \cup S_i} - \sigma \bone_{[n]} \,,\]
where $\sigma = (1/n)[\sum_{i=1}^{k-1} \lambda_i |S_1| +
\sum_{i=2}^{k-1} \lambda_i |S_2| + \cdots +  \lambda_{k-1}|S_{k-1}|]$\,.
In particular, $\alpha$ is constant on each $S_j$ : for all $i\in S_j$, $\alpha_i = \left(\sum_{p=j}^{k-1} \lambda_p\right)-\sigma:=A_j.$

In order to establish (i), we  need to show that
\[ \alpha \preceq \alpha - (c - \hat{c}) , \ \ \ \forall c \in \mathrm{Perm}(b)\,,\] or in other words, we need to show that
\[ \alpha \preceq \alpha - c' , \ \ \ \forall c' \in \mathrm{Perm}(b) - \hat{c}\,.\]

\noindent
We now use again the fact that our choice of $\hat{c}$ implies that, for all $1\le i\le k-1$,
\begin{equation*}
%\label{eq:special}
\langle p_i, \hat{c}\rangle  \ge   \langle p_i, c\rangle\,, \ \ \forall c \in \mathrm{Perm}(b)\,.
\end{equation*}
This in turn gives the following:
\[  \mathrm{Perm}(b) - \hat{c} \subseteq   \{c' :  \langle c', p_i\rangle   \le  0, \ \forall i\}\,.\]
Thus using $N(F)^0 := \{d\in \mathcal{E}_0 ;  \langle d, p_i\rangle  \ \le \ 0, \ \forall i\}$ to denote the {\em polar cone}, it then suffices to show that for  $\alpha$ (as above),
\[ \alpha \preceq \alpha - d, \ \ \forall d \in N(F)^0\,.\]
Now, $d \in N(F)^0$ implies that
\[ 
\langle d, \bone_{S_1 \cup S_2 \cup \cdots \cup S_j}\rangle   \le  0\,  \mbox{ and }  \sum_i d_i =0\,,
\]
therefore denoting $E_j = \sum_{i\in S_1\cup\ldots\cup S_j} d_i$, for all $j\in \{0,1,\ldots,k\}$, one has $E_j \leq 0$ and $E_0=E_k=0.$

Let $f: \RR \to \RR$ be a convex function ; denoting by $f'$ its right derivative, the convexity of $f$ implies that
\[
\sum_{i=1}^n f(\alpha_i-d_i)  = \sum_{j=1}^k \sum_{i \in S_j} f(A_j - d_i)
\geq \sum_{j=1}^k |S_j|f(A_j) - \sum_{j=1}^k f'(A_j)D_j,
\]
where $D_j = \sum_{i\in S_j} d_i.$ Now, using an Abel transform (and the fact that $E_0=E_k=0$), one gets
\[
\sum_{j=1}^k f'(A_j)D_j = \sum_{j=1}^k f'(A_j)(E_j-E_{j-1}) = \sum_{j=1}^{k-1} (f'(A_j)-f'(A_{j+1})E_j \leq 0,
\]
where the inequality comes from $E_j \leq 0$, $A_j \geq A_{j+1}$ and the monotonicity of $f'.$
Therefore, one gets
\[
\sum_{i=1}^n f(\alpha_i-d_i) \geq \sum_{j=1}^k |S_j|f(A_j) = \sum_{i=1}^n f(a_i),
\]
which proves that $a \preceq a-d$, thanks to Proposition \ref{maj=conv}, as expected. This completes the proof.
\end{proof}

\section{Properties of the optimal coupling for weak transport costs}\label{propopti}

This section is devoted to the proof of Theorem  \ref{The:OT}. We will establish first a preliminary result which gives 
some connection between $\mathcal{T}$ and $\overline{\mathcal{T}}$. 
In the sequel, we denote by $\mathrm{Im}(\mu)$, respectively $\mathrm{Im}^{\uparrow}(\mu)$, the set of probability measures on $\RR$ which are images of $\mu$ under some map $S:\RR \to \RR$, respectively some non-decreasing map $S$, \textit{i.e.,} 
\[
\mathrm{Im}(\mu) = \{ \gamma \in \mathcal{P}(\RR) : \exists S : \RR \to \RR \text{ measurable such that } \gamma = S_\# \mu \},
\]
and 
\[
\mathrm{Im}^{\uparrow}(\mu) = \{ \gamma \in \mathcal{P}(\RR) : \exists S : \RR \to \RR \text{ measurable, non-decreasing, such that } \gamma = S_\# \mu \} .
\] 
\begin{Pro}\label{pronu1}
For all probability measures $\mu,\nu$ on $\RR$, it holds
\[
\inf_{\gamma\preceq\nu, \gamma \in \mathrm{Im}^{\uparrow}(\mu)}\mathcal{T}_\theta(\gamma,\mu) \geq 
\overline{\mathcal{T}}_\theta(\nu|\mu)\geq \inf_{\gamma\preceq\nu, \gamma \in \mathrm{Im}(\mu)}\mathcal{T}_\theta(\gamma,\mu).
\]
\end{Pro}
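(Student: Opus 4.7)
The plan is to prove the two inequalities separately, each via an explicit coupling construction. The right-hand inequality is the easy Jensen direction, while the left-hand inequality relies on Strassen's martingale characterization of convex order (Theorem~\ref{Strassen}) together with the Hoeffding--Fr\'echet--Dall'Aglio identity, recalled in the introduction, that in dimension one the monotone rearrangement realizes the optimal transport cost for every convex cost.

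For the right-hand inequality $\overline{\mathcal{T}}_\theta(\nu|\mu) \geq \inf_{\gamma \preceq \nu,\, \gamma \in \mathrm{Im}(\mu)} \mathcal{T}_\theta(\gamma, \mu)$, I would start from an arbitrary coupling $\pi(dx\,dy) = p(x,dy)\mu(dx)$ of $\mu$ and $\nu$ and form the barycentric map $S(x) := \int y\, p(x,dy)$, which is well-defined $\mu$-a.e. The image $\gamma := S_\# \mu$ lies in $\mathrm{Im}(\mu)$ by construction, and $\gamma \preceq \nu$ because for any convex $f$, Jensen's inequality gives
\[
\int f\, d\gamma = \int f(S(x))\, \mu(dx) \leq \iint f(y)\, p(x,dy)\, \mu(dx) = \int f\, d\nu.
\]
Since $(\mathrm{id}, S)_\# \mu$ is a coupling of $\mu$ and $\gamma$, we obtain $\mathcal{T}_\theta(\gamma,\mu) \leq \int \theta(|x - S(x)|)\, \mu(dx)$; passing to the infimum over $\pi$ on the right side and then over $\gamma$ on the left yields the claim.

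For the left-hand inequality, I would fix $\gamma \preceq \nu$ with $\gamma = T_\# \mu$ for some measurable non-decreasing $T$. Applying Strassen's theorem to $\gamma \preceq \nu$ produces a martingale $(Z, Y)$ with $Z \sim \gamma$, $Y \sim \nu$ and $\E[Y\,|\,Z] = Z$; disintegrating the law of $Y$ given $Z$ furnishes a probability kernel $q(z,dy)$ with $\int y\, q(z,dy) = z$ for $\gamma$-a.e. $z$ and $\nu(\cdot) = \int q(z,\cdot)\, \gamma(dz)$. I then define the coupling $\pi(dx\,dy) := q(T(x), dy)\, \mu(dx)$: its first marginal is $\mu$, and its second marginal is $\nu$ since $T_\# \mu = \gamma$. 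Its disintegration kernel $p(x,dy) = q(T(x), dy)$ satisfies $\int y\, p(x,dy) = T(x)$, hence
\[
\overline{\mathcal{T}}_\theta(\nu|\mu) \leq \int \theta(|x - T(x)|)\, \mu(dx) = \mathcal{T}_\theta(\gamma, \mu),
\]
the last equality being the one-dimensional Hoeffding--Fr\'echet--Dall'Aglio identity applied to the monotone map $T$. Taking the infimum over $\gamma \in \mathrm{Im}^{\uparrow}(\mu)$ dominated by $\nu$ concludes.

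The main conceptual step is the Strassen-based lifting in the left-hand bound: the martingale kernel $q$ is precisely what allows one to ``split'' the mass at $T(x)$ back into $\nu$-distributed mass without shifting the barycentric projection, which is the quantity measured by $\overline{\mathcal{T}}_\theta$. The only technical worry, measurability of $S$ and of the kernel $q$, is handled by the standard existence of regular conditional probabilities on Polish spaces.
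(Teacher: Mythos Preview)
Your proof is correct and follows essentially the same route as the paper's: the Jensen-barycentre argument for the right-hand inequality, and the Strassen martingale kernel composed with the monotone map $T$ for the left-hand inequality, invoking the one-dimensional optimality of monotone transport to identify $\int \theta(|x-T(x)|)\,\mu(dx)$ with $\mathcal{T}_\theta(\gamma,\mu)$. The only cosmetic difference is notation ($T$ and $q$ in place of the paper's $S$ and $p_1$).
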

%{\tt \bf Q : Are the two infimum equal ? In discrete, Yan proved this is the case.}
\medskip

\begin{Rem} \label{rem}
Note that when $\mu$ has no atoms, then $\mathrm{Im}^{\uparrow}(\mu) = \mathrm{Im}(\mu)$.
If $\mu$ is a discrete probability measure, then the two sets may be different. For instance, if $\mu = \frac{1}{3}\delta_0 + \frac{2}{3}\delta_1$, then $\gamma = \frac{2}{3}\delta_0 + \frac{1}{3}\delta_1$ is in $\mathrm{Im}(\mu)$ but not in $\mathrm{Im}^{\uparrow}(\mu)$. In the proof of Theorem \ref{The:OT} below, we will use 
Proposition \ref{pronu1} with $\mu$ being the uniform distribution on $n$ distinct points for which it is clear that $\mathrm{Im}^{\uparrow}(\mu) = \mathrm{Im}(\mu)$.
\end{Rem}

\begin{proof}
First we will prove that $\overline{\mathcal{T}}_\theta(\nu|\mu)\geq \inf_{\gamma\preceq\nu, \gamma \in \mathrm{Im}(\mu)}\mathcal{T}_\theta(\gamma,\mu)$.
To that aim, denote by  $\pi(dxdy) = p(x,dy)\mu(dx)$ some coupling between $\mu$ and $\nu$ and set $S (x) := \int y \,p(x,dy)$, $x\in \RR$. Clearly $S_\#\mu \in \mathrm{Im}(\mu)$. Moreover if $f : \RR \to \RR$ is some convex function, by Jensen's inequality, it holds
\begin{align*}
\int f(x)\, S_\#\mu (dx) = \int f\left( \int y\,p(x,dy)\right)\,\mu(dx) 
\leq \iint f(y)\,p(x,dy)\mu(dx) = \int f(y)\,\nu(dy)
\end{align*}
so that $S_\#\mu \preceq \nu$.
Therefore,
\begin{align*}
\int \theta\left( x - \int y \,p(x,dy)\right)\,\mu(dx) &= \int \theta(x - S(x))\,\mu(dx) \geq \mathcal{T}_\theta(S_\#\mu,\mu)
& \geq \!\!\inf_{\gamma \preceq \nu, \gamma \in \mathrm{Im}(\mu)} \!\!\mathcal{T}_\theta(\gamma,\mu)
\end{align*}
from which the claim follows by taking the infimum over $p$.

Now we turn to the proof of the inequality $\overline{\mathcal{T}}_\theta(\nu|\mu)\leq \inf_{\gamma\preceq\nu, \gamma \in \mathrm{Im}^\uparrow(\mu)}\mathcal{T}_\theta(\gamma,\mu)$.
Assume that $\gamma\preceq \nu$ and that $\gamma = S_\# \mu$ for some non-decreasing map $S$. According to Strassen's theorem, there exists a coupling $\pi_1$ with first marginal $\gamma$ and second marginal $\nu$ such that $\pi_1(dxdy)=p_1(x,dy)\gamma(dx)$ and $x=\int_\RR yp_1(x,dy)$, $\gamma$ almost everywhere. For all $x\in \RR$, define the following probability measure $p(x,dy):=p_1(S(x),dy)$.
 Then for all bounded continuous function $f$, it holds
\begin{align*}
\iint f(y)p(x,dy)\,\mu(dx)
&=
\iint f(y)p_1(S(x),dy)\,\mu(dx)\\
&=\iint f(y) p_1(x,dy)\,\gamma(dx)=\int f(y)\,\nu(dy).
\end{align*}
Thus the coupling $\pi(dxdy)=p(x,dy)\mu(dx)$ has $\mu$ as first marginal and $\nu$ as second marginal. Moreover, by definition of $p_1$ and $p$, $\mu$ almost everywhere, it holds 
\[
\int yp(x,dy) = \int y p_1(S(x),dy) =  S(x).
\]
Since $S$ is non-decreasing, it realizes the optimal transport between $\mu$ and $\nu$ for the classical transport cost $\mathcal{T}_\theta$ and so it follows that
\begin{align*}
\mathcal{T}_\theta(\gamma,\mu)
=
\int\theta(|x-S(x)|)\mu(dx)
=
\int\theta(|x-\int y p(x,dy)|)\mu(dx)\geq \overline{\mathcal{T}}_\theta(\nu|\mu)
\end{align*}
which achieves the proof by taking the infimum over $\gamma$.
\end{proof}

We  are now in a position to prove Theorem \ref{The:OT}. 

%Let us emphasize that the probability $\hat{\gamma}$ depends on $\mu$ and $\nu$ but not on the cost function $\theta.$ This will be crucial in the sequel.

\begin{proof}[Proof of Theorem \ref{The:OT}]
The proof of the first part of Theorem \ref{The:OT} is divided into two steps. In the first step we will deal with uniform discrete measures on $n$ points, while in the second step we will use an approximation argument in order to reach any measures.

 \smallskip

\textbf{Step 1.} We  first deal with
\[
\mu := \frac{1}{n}\sum_{i=1}^n \delta_{a_i}\qquad \text{and}\qquad \nu := \frac{1}{n} \sum_{i=1}^n \delta_{b_i},
\]
with $a_1 <a_2<\ldots < a_n$ and $b_1<b_2<\ldots< b_n$.
Set $a:=(a_1,\ldots,a_n)$ and $b:=(b_1,\ldots,b_n)$. 
According to Theorem \ref{goodprop}, there exists some $\hat{c} \in \mathrm{Perm}(b)$ such that $a - \hat{c} \preceq a - c$, for all $c \in \mathrm{Perm}(b)$. Moreover the coordinates of $\hat{c}$ satisfy $\hat{c}_i \leq \hat{c}_{i+1}$. 
Set $\hat{\gamma}:=\frac{1}{n}\sum_{i=1}^n \delta_{\hat{c}_i}$ and observe that $\nu$ dominates $\hat{\gamma}$ for the convex order and $\hat{\gamma} \in \mathrm{Im}^\uparrow(\mu)$. (Recall the definition from the beginning of this section.)

Now for any $\gamma:=\frac{1}{n}\sum_{i=1}^n \delta_{c_i} \in \mathrm{Im}^\uparrow(\mu)$ with  $c_i\leq c_{i+1}$
and for any convex cost function $\theta$,
it holds (since the coordinates are non-decreasing) 
\[
\mathcal{T}_\theta(\gamma,\mu) = \frac{1}{n}\sum_{i=1}^n \theta(|a_i-c_i|).
\]
In particular
\begin{equation}\label{eq:proofThe:OT}
\mathcal{T}_\theta(\hat{\gamma},\mu) 
= 
\frac{1}{n}\sum_{i=1}^n \theta(|a_i-\hat{c}_i|) 
\leq 
\inf_{c \in \mathrm{Perm}(b)}\frac{1}{n}\sum_{i=1}^n \theta(|a_i -c_i|)
%=
%\inf_{c \in \mathrm{Perm}(b)}\frac{1}{n}\sum_{i=1}^n \theta(|a_i -c_i|)
.
\end{equation}
A probability $\gamma$ such that $\gamma \preceq \nu, \gamma \in \mathrm{Im}^\uparrow(\mu)$ is of the form $\gamma=\frac{1}{n}\sum_{i=1}^n \delta_{c_i}$ with $c_i\leq c_{i+1}$ and $c=(c_1,\ldots,c_n) \in \mathrm{Perm}(b)$, and for such a $c$, it holds $ \frac{1}{n}\sum_{i=1}^n \theta(|a_i-c_i|)=\mathcal{T}_\theta(\gamma,\mu)$. Therefore, the latter implies
$$
\mathcal{T}_\theta(\hat{\gamma},\mu) 
\leq
\inf_{\gamma \preceq \nu, \gamma \in \mathrm{Im}^\uparrow(\mu)} \mathcal{T}_\theta(\gamma,\mu) =\overline{\mathcal{T}}_\theta(\nu|\mu)
$$
where the last equality follows from Proposition \ref{pronu1} and the fact that for such a distribution $\mu$, it holds $\mathrm{Im}(\mu) = \mathrm{Im}^\uparrow (\mu)$ (see Remark \ref{rem}).
Since obviously $\overline{\mathcal{T}}_\theta(\nu|\mu) \leq \mathcal{T}_\theta(\hat{\gamma},\mu)$, 
 we conclude that $\mathcal{T}_\theta(\hat{\gamma},\mu) = \overline{\mathcal{T}}_\theta(\nu|\mu)$ as expected.

\smallskip

\textbf{Step 2.} In the second step we deal with  the general case using an approximation argument.

Let $\mu$ and $\nu$ be two elements of $\mathcal{P}_1(\RR)$. By assumption, $\int |x|\,\mu(dx)< \infty$  and $\int |x|\,\nu(dx) <\infty$, hence, according to the de la Vall\'ee-Poussin Theorem (see \textit{e.g.} \cite[Theorem 4.5.9]{Bog07}), there exists an increasing convex function $\beta:\RR^+ \to \RR^+$ such that $\beta(t)/t \to \infty$ as $t \to \infty$ and such that $\int \beta(|x|)\,\mu(dx) <\infty$ and $\int \beta(|x|)\,\nu(dx) <\infty.$

Next we will construct discrete approximations of $\mu$ and $\nu$. According to Varadarajan's theorem (see \textit{e.g.} \cite[Theorem 11.4.11]{Dud02}), if $X_i$ is an i.i.d sequence of law $\mu$, then, with probability $1$, the empirical measure $L_n^X := \frac{1}{n} \sum_{i=1}^n \delta_{X_i}$ converges weakly to $\mu$. 
On the other hand, according to the strong law of large numbers, with probability $1$, $\frac{1}{n}\sum_{i=1}^n |X_i| \to \int |x|\,\mu(dx)$ as $n \to \infty$.
Let us take $(x_i)_{i \geq 1}$, a positive realization of these events and set $\mu_n = \frac{1}{n}\sum_{i=1}^n \delta_{x_i^{(n)}},$ where $x_1^{(n)}\leq x_2^{(n)}\leq \ldots\leq x_n^{(n)}$ denotes the increasing re-ordering of the vector $(x_1,x_2,\ldots,x_n).$  Then the sequence $\mu_n$ converges weakly to $\mu$ and $\int |x|\,\mu_n(dx) \to \int |x|\,\mu(dx).$ According to \cite[Theorem 6.9]{Vil09}, this is equivalent to the convergence of the $W_1$ distance : $W_1(\mu_n,\mu) \to 0$ as $n \to \infty.$
Note that one can assume that the points $x_i^{(n)}$ are distinct. Indeed, if this is not the case, then letting $\tilde{x}_i^{(n)}=x_i^{(n)} + i/n^2$ one obtains distinct points and it is not difficult to check that $\tilde{\mu}_n = \frac{1}{n}\sum_{i=1}^n \delta_{\tilde{x}_i^{(n)}}$ still weakly converges to $\mu$ (for instance the $W_1$ distance between $\mu_n$ and $\tilde{\mu}_n$ is easily bounded from above by $(n+1)/(2n^2)$). The same argument yields a sequence $\nu_n = \frac{1}{n}\sum_{i=1}^n \delta_{y_i^{(n)}}$ with $y_i^{(n)}<y_{i+1}^{(n)}$ converging to $\nu$ in the $W_1$ sense. It is not difficult to check (invoking the strong law of large numbers again) that one can further impose that $\int \beta(|x|)\,\nu_n(dx) \to \int \beta(|x|)\,\nu(dx)$, as $n \to \infty.$

For all $n\geq 1$, one applies the result proved in the first step : there exists a unique probability measure $\hat{\gamma}_n \preceq \nu_n$ such that
\[
\overline{\mathcal{T}}_\theta(\nu_n |\mu_n) = \mathcal{T}_\theta(\hat{\gamma}_n,\mu_n),
\]
for all convex cost functions $\theta$. Let us show that one can extract from $\hat{\gamma}_n$ a subsequence converging to some $\hat{\gamma}$ in $\mathcal{P}_1(\RR)$ for the $W_1$ distance. By construction $\int \beta(|x|)\,\nu_n(dx) \to \int \beta(|x|)\,\nu(dx)$ and so $M=\sup_{n\geq1} \int \beta(|x|)\,\nu_n(dx)$ is finite. Since $\hat{\gamma}_n \preceq \nu_n$ and since the function $x \mapsto \beta(|x|)$ is convex, it thus holds $\int \beta(|x|)\,\gamma_n(dx) \leq \int \beta(|x|)\,\nu_n(dx) \leq M.$ In particular, setting $c(R) = \inf_{t\geq R} \beta(t)/t$, $R>0$, Markov's inequality easily implies that 
\[
\int_{[-R,R]^c} |x|\,\hat{\gamma}_n(dx) \leq \frac{\int \beta(|x|)\,\nu_n(dx) }{c(R)}\leq \frac{M}{c(R)}. 
\]
Consider $\tilde{\gamma}_n$ defined by $\frac{d\tilde{\gamma}_n}{d\hat{\gamma}_n}(x) = \frac{1+|x|}{\int 1+|x|\,\hat{\gamma}_n(dx)}.$ Then it holds, 
\[
\sup_{n\geq 1}\tilde{\gamma}_n([-R,R]^c) \leq \frac{2M}{c(R)},\qquad \forall R\geq 1\,,
\]
and so the sequence $\tilde{\gamma}_n$ is tight. Therefore, according to the Prokhorov Theorem, extracting a subsequence if necessary, one can assume that $\tilde{\gamma}_n$ converges to some $\tilde{\gamma}$ for the weak topology. Extracting yet another subsequence if necessary, one can also assume that $\int (1+|x|)\,\gamma_n(dx)$ converges to some number $Z>0.$ The weak convergence of $\tilde{\gamma}_n$ to $\tilde{\gamma}$ means that $\int \varphi \,d\tilde{\gamma}_n \to \int \varphi \,d\gamma$ for all bounded continuous $\varphi$, which means that 
\[
\int (1+|x|)\varphi(x)\,\hat{\gamma}_n(dx) \to \int (1+|x|)\varphi(x)\,\hat{\gamma}(dx),
\]
where $\hat{\gamma}(dx) = \frac{Z}{1+|x|}\,\tilde{\gamma}(dx) \in \mathcal{P}_1(\RR).$
Invoking again \cite[Theorem 6.9]{Vil09}, this implies $\hat{\gamma}_n \to \hat{\gamma}$ as $n \to \infty$ for the $W_1$ distance.

Now we will check that $\hat{\gamma}$ is such that $\overline{\mathcal{T}_\theta}(\nu|\mu) = \mathcal{T}_\theta(\hat{\gamma},\mu)$ for all convex cost functions $\theta:\RR^+ \to \RR^+.$
First assume that $\theta$ is Lipschitz, and denote by $L_\theta$ its Lipschitz constant. According to 
\cite[Theorem 2.11]{GRST15}, the following Kantorovich duality formula holds
\[
\overline{\mathcal{T}}_\theta (\nu_n|\mu_n) = \sup_{\varphi} \left\{\int Q_\theta \varphi(x)\,\nu_n(dx) - \int \varphi(y)\,\mu_n(dy)\right\},
\]
where the supremum is taken over the set of convex functions $\varphi$ bounded from below, 
with $Q_\theta \varphi(x) := \inf_{y \in \RR} \{ \varphi(y) + \theta(|x-y|)\}$, $x \in \RR$. Define $\bar{\varphi} (y):= \sup_{x\in \RR} \{Q_\theta\varphi(x) - \theta(|x-y|)\}.$ Then it is easily checked that $\bar{\varphi} \leq \varphi$, $\bar{\varphi}$ is bounded from below and $Q_\theta \bar{\varphi} = Q_\theta\varphi$.  Moreover, being a supremum of convex and $L_\theta$-Lipschitz functions, the function $\bar{\varphi}$ is also convex and $L_\theta$-Lipschitz. Therefore, the supremum in the duality formula above can be further restricted to the class of convex functions which are $L_\theta$-Lipschitz and bounded from below. Using the fact that $W_1(\nu_n,\nu) = \sup\{\int f\,d\nu_n -\int f\,d\nu\}$ where the supremum runs over $1$-Lipschitz function and the fact that $Q_\theta \varphi$ is $L_\theta$-Lipschitz (being an infimum of such functions), we easily get the following inequality
\[
|\overline{\mathcal{T}}_\theta(\nu_n|\mu_n) - \overline{\mathcal{T}}_\theta(\nu|\mu)| \leq L_\theta W_1(\nu_n,\nu) + L_\theta W_1(\mu_n,\mu).
\]
A similar (but simpler reasoning) based on the usual Kantorovich duality for $\mathcal{T}_\theta$ yields the inequality
\[
|\mathcal{T}_\theta(\hat{\gamma}_n,\mu_n) - \mathcal{T}_\theta(\hat{\gamma},\mu)| \leq L_\theta W_1(\hat{\gamma}_n,\hat{\gamma}) + L_\theta W_1(\mu_n,\mu).
\]
Passing to the limit as $n \to \infty$ in the identity $\overline{\mathcal{T}}_\theta(\nu_n | \mu_n) = \mathcal{T}_\theta(\hat{\gamma}_n,\mu_n)$, we end up with $\overline{\mathcal{T}}_\theta(\nu | \mu) = \mathcal{T}_\theta(\hat{\gamma},\mu)$.

Now it remains to extend this identity to general convex functions $\theta$ not necessarily Lipschitz.
Let $\theta : \RR^+ \to \RR^+$ be a convex cost function (such that $\theta(0)=0$) and for all $n \geq 1$, let $\theta_n$ be the convex cost function defined by $\theta_n(x) = \theta(x)$, if $x \in [0,n]$ and $\theta_n(x) = \theta(n) + \theta'(n) (x-n)$, if $x \geq n,$ where $\theta'$ denotes the right derivative of $\theta.$ It is easily seen that $\theta_n$ is Lipschitz and that $Q_{\theta_n} \varphi$ converges to $Q_\theta \varphi$ monotonically as $n \to \infty$, for any function $\varphi$ bounded from below. 
Therefore, the monotone convergence theorem implies that for any probability measure $\gamma$, it holds $\int Q_\theta\varphi\,d\gamma = \sup_{n\geq 1} \int Q_{\theta_n}\varphi\,d\gamma$. We deduce from this that $\overline{\mathcal{T}}_\theta(\nu|\mu) = \sup_{n \geq 1} \overline{\mathcal{T}}_{\theta_n}(\nu|\mu)$ and $\mathcal{T}_\theta(\hat{\gamma}|\mu) = \sup_{n \geq 1} \mathcal{T}_{\theta_n}(\hat{\gamma},\mu)$. Since $\overline{\mathcal{T}}_{\theta_n}(\nu|\mu) =  \mathcal{T}_{\theta_n}(\hat{\gamma},\mu)$ for all $n\geq 1$, 
this ends the proof of the first part of the theorem \textit{i.e.}\ that 
$\overline{\mathcal{T}}_\theta(\nu|\mu)=\mathcal{T}_\theta(\hat{\gamma}|\mu)$).

\medskip

From the first part of the theorem we conclude that 
there exists some $\hat{\gamma} \in \mathcal{P}_1(\RR)$ such that 
$\tbar_\theta(\nu|\mu) = \mathcal{T}_\theta (\hat{\gamma},\mu)$ for the three cost functions $\theta = \theta_1, \theta_2, \theta_1 +\theta_2$. The result then follows from the well-known additivity of $\mathcal{T}_\theta$ in dimension one:
$
\mathcal{T}_{\theta_1 +\theta_2} (\hat{\gamma}, \mu) 
= 
\mathcal{T}_{\theta_1} (\hat{\gamma}, \mu) +\mathcal{T}_{\theta_2} (\hat{\gamma}, \mu)$.
This ends the proof of the theorem.
\end{proof}

\section{Dual formulation for weak transport-entropy inequalities.}\label{dualc}

In this short section we recall the Bobkov and G\"otze dual formulation of the transport-entropy inequality
 \eqref{eq:Ttheta-intro} and its extensions, borrowed from \cite{GRST15}, related to the transport-entropy inequalities of Definition \ref{defitrans}, in terms of infimum convolution inequalities. 
The results are stated in dimension one to fit our framework but hold in more general settings (see \cite{GRST15}). They will be used in the next sections.

\begin{Lem}\label{Lem:equiv-inf-conv}
Let $\mu \in \mathcal{P}_1(\RR)$ and $\theta:\RR^+ \to \RR^+$ be a convex cost function and, for all functions $g:\RR \to \RR$ bounded from below, set
\[
Q_tg(x) := \inf_{y \in \RR} \left\{f(y) + t\theta\left(\frac{|x-y|}{t}\right)\right\},\qquad  t>0, x \in \mathbb{R}.
\]  
Then the following holds. 
\begin{enumerate}
\item[(i)]  $\mu$ satisfies $\mathrm{T}(\theta)$ if and only if for all $g:\RR \to \RR$  bounded from below it holds
$$
\exp\left( \int Q_1g\,d\mu\right) \exp\left(-\int g\,d\mu\right) \leq 1 .
$$
\item[(ii)] $\mu$ satisfies $\overline{\mathrm{T}}^+(\theta)$ if and only if for all convex  $g:\RR \to \RR$ bounded from below it holds
\[
\exp\left( \int Q_1g\,d\mu\right) \int \exp(-g)\,d\mu \leq 1 .
\]
\item[(iii)] $\mu$ satisfies $\overline{\mathrm{T}}^-(\theta)$ if and only if for all convex  $g:\RR \to \RR$ bounded from below it holds
\[
 \int\exp(Q_1g)\,d\mu  \exp\left(-\int g\,d\mu\right) \leq 1 .
\]
\item[(iv)] If $\mu$ satisfies  $\overline{\mathrm{T}}(\theta)$, then for all convex $g:\RR \to \RR$ bounded from below it holds
\begin{equation}\label{eq:property tau}
\int\exp(Q_tg)\,d\mu  \int \exp (- g)\,d\mu \leq 1,
\end{equation}
with $t=2$.
Conversely, if $\mu$ satisfies \eqref{eq:property tau} for some $t>0$, then it satisfies $\overline{\mathrm{T}}(t\theta(\,\cdot\,/t)).$
\end{enumerate}
\end{Lem}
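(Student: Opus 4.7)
The plan is to derive all four items from two standard ingredients: a Kantorovich-type duality formula for the relevant transport cost, and the Donsker--Varadhan variational formula $\log\int e^f\,d\mu = \sup_\nu \{\int f\,d\nu - H(\nu|\mu)\}$. For (i), one invokes the classical Kantorovich duality writing $\mathcal{T}_\theta(\nu,\mu)$ as the supremum over bounded-below $\varphi$ of $\int Q_1 \varphi\,d\nu - \int \varphi\,d\mu$. For (ii) and (iii), one uses the Kantorovich-type duality for $\overline{\mathcal{T}}_\theta$ from \cite{GRST15}, where the supremum is restricted to \emph{convex} $\varphi$ bounded below (the restriction to convex test functions is precisely what allows the application of Jensen's inequality to the barycentric term $\int y\,p(x,dy)$ when proving the duality). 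The asymmetry between (ii) and (iii) comes from which argument of $\overline{\mathcal{T}}_\theta(\cdot|\cdot)$ plays the role of the first marginal in the coupling, and this ultimately decides whether $Q_1\varphi$ is integrated against $\mu$ or against $\nu$ in the dual.

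Items (i)--(iii) then follow from a common template: rewrite the transport-entropy inequality in its dual form $\int Q_1 \varphi\,d(\text{one measure}) - \int \varphi\,d(\text{the other}) \leq H(\nu|\mu)$, which must hold for all admissible $\varphi$ and all $\nu$; take the supremum over $\nu$ (with $\varphi$ fixed) via Donsker--Varadhan; and exponentiate. For instance, for (ii) the template produces $\int Q_1 \varphi\,d\mu + \log\int e^{-\varphi}\,d\mu \leq 0$, which rearranges to $\exp(\int Q_1 g\,d\mu)\int e^{-g}\,d\mu \leq 1$ on setting $g=\varphi$. All steps are reversible (the Donsker--Varadhan identity is an equality and the exponential is monotone), so these are genuine equivalences.

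Part (iv) requires one genuinely new ingredient. For the forward implication, I would use the Hamilton--Jacobi semigroup property $Q_s \circ Q_t = Q_{s+t}$, valid for any convex $\theta$; this is proved by applying Jensen's inequality with weights $\tfrac{t}{s+t}$ and $\tfrac{s}{s+t}$ to the inner minimization in $z$ of $t\theta(|z-y|/t) + s\theta(|x-z|/s)$, combined with monotonicity of $\theta$ and the triangle inequality. In particular $Q_2 g = Q_1(Q_1 g)$. Since $\theta(|\cdot|)$ is convex on $\RR$ (as $\theta$ is nonnegative, convex, and vanishes at $0$, hence nondecreasing on $\RR^+$), the inf-convolution $Q_1 g$ is convex whenever $g$ is, and $Q_1 g \geq \inf g$, so boundedness from below is preserved. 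Applying (iii) to the convex function $Q_1 g$ and (ii) to $g$ and multiplying, the factors $e^{\pm\int Q_1 g\,d\mu}$ cancel and one is left with $\int e^{Q_2 g}\,d\mu \cdot \int e^{-g}\,d\mu \leq 1$. For the converse, Jensen's inequality applied separately to each factor of the joint inequality produces both $\exp(\int Q_t g\,d\mu)\int e^{-g}\,d\mu \leq 1$ and $\int e^{Q_t g}\,d\mu\cdot \exp(-\int g\,d\mu) \leq 1$; observing that $Q_t$ with cost $\theta$ coincides with $Q_1$ with the rescaled cost $\theta_t := t\theta(\cdot/t)$, items (ii) and (iii) applied with $\theta_t$ in place of $\theta$ yield $\overline{\mathrm{T}}^+(\theta_t)$ and $\overline{\mathrm{T}}^-(\theta_t)$, hence $\overline{\mathrm{T}}(\theta_t)$.

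The only real subtlety is the use of the semigroup identity in the forward direction of (iv); the rest is a clean bookkeeping exercise mixing Kantorovich duality with the variational formula for entropy.
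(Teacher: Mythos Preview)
Your proposal is correct and follows essentially the same route as the paper: items (i)--(iii) are obtained by combining Kantorovich-type duality (classical for $\mathcal{T}_\theta$, and the version from \cite{GRST15} for $\overline{\mathcal{T}}_\theta$ restricted to convex test functions) with the Donsker--Varadhan formula for relative entropy, and item (iv) forward is proved exactly as you describe---apply (ii) to $g$ and (iii) to $Q_1g$, multiply, and invoke the semigroup identity $Q_1(Q_1g)=Q_2g$---while the converse comes from Jensen. The paper gives only citations for (i)--(iii) and the same sketch for (iv), so your write-up is in fact more detailed than the original.
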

\proof
The first item is due to Bobkov and G\"otze \cite{BG99a} and is based on a combination of the well-known duality formulas for the relative entropy and for the transport cost $\mathcal{T}_\theta$. Items $(ii)$ and $(iii)$ generalize the first item to the framework of weak transport-entropy inequalities. We refer to \cite[Proposition 4.5]{GRST15} for a more general statement and for a proof (based on an extension of duality for weak transport costs). 

Finally we sketch the proof of Item $(iv)$ (which already appeared in a slightly different form in \cite[Propositions 8.2 and 8.3]{SurveyGL}). By the very definition, if $\mu$ satisfies $\overline{\mathrm{T}}(\theta)$ then it satisfies $\overline{\mathrm{T}}^\pm(\theta)$ and therefore, it satisfies the exponential inequalities given in Items $(ii)$ and $(iii)$. Note that if $g$ is convex and bounded from below then $Q_1g$ is also convex and bounded from below. Therefore it holds
\[
\exp\left( \int Q_1g\,d\mu\right) \int \exp(-g)\,d\mu \leq 1
\]
and 
\[
\int\exp(Q_1(Q_1g))\,d\mu  \exp\left(-\int Q_1g\,d\mu\right) \leq 1.
\]
Multiplying these two inequalities and noticing that $Q_1(Q_1g) = Q_{2}g$ (for a proof of this well-known semi-group property, see \textit{e.g}\ \cite[Theorem 22.46]{Vil09}) 
gives \eqref{eq:property tau} with $t=2$. The converse implication simply follows from Jensen's inequality.
\endproof

\section{A transport form of the convex Poincar\'e inequality}\label{sectionpoincare}

This section is devoted to the proof of Theorem \ref{thm:equiv-Poincare}. 
Since, from \cite{BG99b}, Item $(i)$ is equivalent to Item $(ii)$, and since it is easy to prove that Item $(iii)$ implies Item $(ii)$ we will mainly focus on the implication $(i) \Rightarrow (iii)$. Our strategy is to prove a modified logarithmic Sobolev inequality for the exponential probability measure $\tau$ and then, using a transport argument,  a modified logarithmic Sobolev inequality for general $\mu$ (satisfying the assumption of Item $(i)$) restricted to convex or concave Lipschitz functions. 
Finally, following the well-known Hamilton-Jacobi interpolation technique of \cite{BGL01}, the desired transport inequalities will follow in their dual forms (recalled in Lemma \ref{Lem:equiv-inf-conv}).

We need some notations.
Given a convex or concave function $g \colon \RR \to \RR$, we set
\begin{equation}\label{eq:nabla}
|\nabla g|(x) := \min \{ | \theta g'_-(x) + (1-\theta) g'_+(x)| ; \theta \in [0,1] \},
\end{equation}
where $g'_-$ and $g'_+$ denote the left and right derivatives of $g$ (which are well-defined everywhere).
In particular, if $g$ is convex 
\[
|\nabla g|(x) 
%= \min ( |t| ; t \in [g'_-(x), g'_+(x)]) 
= \left\{\begin{array}{ll} 
|g'_+(x)| & \text{ if } g'_+(x) \leq 0 \\
0 & \text{ if } g'_-(x)\leq 0 \leq g'_+(x) \\ 
g'_-(x) & \text{ if } g'_-(x) \geq0\,,
\end{array}\right.
\]
and if $g$ is concave
$$
|\nabla g|(x) 
%= \min ( |t| ; t \in [g'_+(x), g'_-(x)]) 
= \left\{\begin{array}{ll} 
|g'_-(x)| & \text{ if } g'_-(x) \leq 0 \\
0 & \text{ if } g'_+(x)\leq 0 \leq g'_-(x) \\ 
g'_+(x) & \text{ if } g'_+(x) \geq 0 .
\end{array}\right.
$$

The following result is one of the key ingredients in the proof of Theorem \ref{thm:equiv-Poincare}. 
Recall the definition of $U_\mu$ from the introduction.

\begin{Pro}\label{LSM}
Let $\mu$ be a probability measure on $\RR$. Assume that
$\sup_{x\in \RR}[U_\mu(x+1)-U_\mu(x)]\leq h$ for some $h>0$. Set $K:=2740$ and $c :=1/(10 \sqrt{2})$. Then, 
for all convex or concave and $l$-Lipschitz functions $g$ with $l\leq c/h$, it holds
\begin{equation}\label{eq:LSM}
\ent_\mu(e^g)\leq Kh^2\int_\RR |\nabla g(x)|^2e^{g(x)}\mu(dx) .
\end{equation}
\end{Pro}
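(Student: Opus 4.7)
My plan is to transport a modified logarithmic-Sobolev inequality from the symmetric exponential law $\tau$ to $\mu$, using the non-decreasing map $U_\mu$ that sends $\tau$ onto $\mu$. The key ingredient is a discrete-type log-Sobolev inequality for $\tau$ (the ``Lemma \ref{lem:expo}'' announced in the introduction), which should roughly assert that for any $\varphi:\RR\to\RR$ whose oscillation on every unit interval is bounded by some explicit small constant $c_0$,
\[
\ent_\tau(e^\varphi)\leq K_0\int D(\varphi)(x)^2\,e^{\varphi(x)}\,\tau(dx),
\]
where $D(\varphi)(x)$ is a discrete gradient controlling $|\varphi(y)-\varphi(x)|$ for $|y-x|\leq 1$. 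This is the natural discrete counterpart of the Bobkov--Ledoux modified log-Sobolev for $\tau$, and should be provable either by a Herbst-type scheme combined with the Poincar\'e inequality for $\tau$, or directly using the memoryless property of the exponential (tensorization across unit intervals).

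Granted such a lemma, the transport step is immediate: since $U_\mu$ pushes $\tau$ onto $\mu$, one has $\ent_\mu(e^g)=\ent_\tau(e^{g\circ U_\mu})$. Set $\varphi:=g\circ U_\mu$. The Lipschitz hypothesis on $g$ combined with $U_\mu(x+1)-U_\mu(x)\leq h$ yields $|\varphi(y)-\varphi(x)|\leq lh$ for $|y-x|\leq 1$, and the choice $l\leq c/h$ with $c=1/(10\sqrt2)$ ensures $lh$ lies below the smallness threshold required by the lemma, which then applies to $\varphi$.

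It remains to bound $K_0\int D(\varphi)^2 e^\varphi d\tau$ by $Kh^2\int|\nabla g|^2 e^g d\mu$. Here convexity/concavity of $g$ is essential. Writing $a:=U_\mu(x)$, $b:=U_\mu(y)$ with $b-a\leq h$, a Cauchy--Schwarz applied to $g(b)-g(a)=\int_a^b g'(t)\,dt$ yields
\[
(\varphi(y)-\varphi(x))^2\leq h\int_a^b g'(t)^2\,dt.
\]
Inserting this into the log-Sobolev bound and exchanging the two integrations by Fubini, the inner $\tau$-integral for each $t$ reduces to $\int \bone_{U_\mu(x)\leq t\leq U_\mu(x+1)}\,e^{\varphi(x)}\,\tau(dx)$; because $U_\mu$ is non-decreasing, the $x$-set of integration is contained in a unit-length interval, and the Lipschitz bound $|\varphi(x)-g(t)|\leq lh$ makes $e^{\varphi(x)}$ comparable to $e^{g(t)}$ up to a universal factor. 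After the change of variables $t=U_\mu(\,\cdot\,)$, the $\tau$-mass of these unit intervals collapses (up to a constant) to the $\mu$-mass at $t$, producing the factor $h^2$ and the integrand $|g'(t)|^2 e^{g(t)}$ against $\mu$. The convexity of $g$ (and monotonicity of $g'$) is what allows one to identify $|g'(t)|$ with $|\nabla g|(t)$ at $\mu$-almost every point that matters: on any monotone branch of $g$ the two agree, while near the flat spot where $|\nabla g|=0$ the convexity forces $g'$ to be small as well. The concave case follows by replacing $g$ with $-g$.

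The main obstacle is establishing Lemma \ref{lem:expo} itself with a discrete gradient $D$ calibrated precisely to scale $1$, with explicit constants and under the correct smallness condition, so that after the Cauchy--Schwarz/Fubini reduction the resulting constants assemble into $K=2740$. A secondary, but subtle, point is keeping the passage from $|g'|^2$ to $|\nabla g|^2$ quantitative near the minimum of a convex $g$; this is where merely Lipschitz regularity would be insufficient and convexity is used in an essential way.
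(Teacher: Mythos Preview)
Your overall strategy---transport via $U_\mu$ from a discrete log-Sobolev inequality for $\tau$ (Lemma~\ref{lem:expo})---is exactly the paper's approach, and the smallness check $lh\leq c$ is correct. However, two aspects of your reduction differ from the paper, and one contains a real error.

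First, the paper bounds the discrete gradient of $\varphi=g\circ U_\mu$ much more directly than your Cauchy--Schwarz/Fubini route. It treats the monotone cases first: for convex non-decreasing $g$, one has the pointwise chain
\[
0\leq g(U_\mu(x))-g(U_\mu(x-1))\leq g(U_\mu(x))-g(U_\mu(x)-h)\leq h\,g'_-(U_\mu(x))=h\,|\nabla g|(U_\mu(x)),
\]
using first $U_\mu(x-1)\geq U_\mu(x)-h$ and then convexity. Squaring, multiplying by $e^{\varphi}$, and pushing forward by $U_\mu$ gives the bound immediately---no Fubini, no change of variables beyond $(U_\mu)_\#\tau=\mu$. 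The non-increasing case is symmetric, and the general convex case is obtained by writing $g=g_1+g_2$ (with $g_1,g_2$ supported on the two monotone branches, normalized so $g(a)=0$ at the minimizer $a$) and invoking an elementary subadditivity of entropy. Your Fubini argument is heuristically plausible when $\mu$ has a smooth density, but the step ``the $\tau$-mass of these unit intervals collapses (up to a constant) to the $\mu$-mass at $t$'' is not justified and becomes delicate when $\mu$ has atoms---a case the statement explicitly allows.

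Second, and more seriously, your claim that the concave case follows by replacing $g$ with $-g$ is incorrect: that substitution yields a bound on $\ent_\mu(e^{-g})$, not on $\ent_\mu(e^{g})$. The paper handles concave $g$ by a parallel argument using the \emph{forward} discrete gradient $f(x+1)-f(x)$ (inequality \eqref{expo:x+1} of Lemma~\ref{lem:expo}) instead of the backward one; for concave non-decreasing $g$ the relevant pointwise bound is $0\leq g(x+h)-g(x)\leq h\,g'_+(x)=h\,|\nabla g|(x)$, and the argument then runs symmetrically.
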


The proof of Proposition \ref{LSM} is postponed to the end of this section.

\begin{proof}[Proof of Theorem \ref{thm:equiv-Poincare}]
As already mentioned above, from \cite{BG99b} we conclude that Item $(i)$ is equivalent to Item $(ii)$.
In order to make the dependency of the constants explicit in the implication $(i) \Rightarrow (ii)$, one can use a well-known expansion argument: apply \eqref{eq:LSM} to $\varepsilon f$  and take the limit $\varepsilon \to 0$, see \textit{e.g.}\ \cite{ABC+}. On the other hand, using a similar expansion argument, it is easy to prove that Item $(iii)$ implies Item $(ii)$ with $C=2D$: apply \eqref{eq:property tau} to $g=\varepsilon f$ and take the limit $\varepsilon \to 0$, see \textit{e.g.}\ \cite{SurveyGL,GRS11}. Hence, we are left with the proof of $(i)$ implies $(iii)$ which closely follows the Hamilton-Jacobi semi-group approach introduced in \cite{BGL01}. 
%Since such an approach is well-known, we may only sketch the main arguments and refer to \cite{BGL01, GRS14, GRST15} for details.

Let $\mu$ be a probability measure on the line and assume that Item $(i)$ of Theorem \ref{thm:equiv-Poincare} holds.
According to Proposition \ref{LSM}, for any convex or concave differentiable function 
$g$ which is $l$-Lipschitz with $l \leq c/h:=l_o$, it holds
\begin{equation}\label{eq:LSMbis}
\ent_\mu(e^g)\leq Kh^2\int_\RR |\nabla g(x)|^2e^{g(x)}\mu(dx)\,,
\end{equation}
with $K=2740$ and $c=1/(10 \sqrt{2})$.
It is easy to check that the latter is equivalent to
\begin{equation}\label{eq:LSMbis}
\ent_\mu(e^g) \leq \int \alpha^*(|\nabla g|)e^g\,d\mu,
\end{equation}
for all convex or concave $g \colon \RR \to \RR$
with 
$$
\alpha^*(v) := \sup_{u}\{uv - \alpha(u)\} = \left\{
\begin{array}{ll}
Kh^2v^2 & \text{ if } |v| \leq l_o \\ 
+\infty & \text{ if } |v|>l_o 
\end{array}
\right.
$$
the convex conjugate of $
\alpha(u) := \left\{
\begin{array}{ll}
\frac{u^2}{4Kh^2} & \text{ if } |u| \leq 2l_oKh^2 \\ 
l_o|u|-l_o^2Kh^2 & \text{ if } |u|>2l_oKh^2 
\end{array}
\right.
$.

Now, introduce the inf-convolution operators $Q_t$, for $t\in (0,1]$, defined by 
\[
Q_tf(x) := \inf_{y \in \RR}\left\{ f(y) + t\alpha\left(\frac{x-y}{t}\right)\right\},\qquad x \in \RR,\qquad  t \in (0,1],
\]
which makes sense for instance for any Lipschitz function $f$ or for any function $f$ bounded from below.
For simplicity denote by $\mathcal{F}$ the set of functions $f \colon \RR \to \RR$ that are $l$-Lipschitz and concave, $l \leq l_o$, or convex and bounded below.
Then, $Q_t$ satisfies the following technical properties:
\begin{itemize}
\item[$(a)$] If $f$ is convex, then $Q_tf$ is convex.
\item[$(b)$] If $f$ is concave and Lipschitz, then $Q_tf$ is concave.
\item[$(c)$] If $f \in \mathcal{F}$,  then  $Q_tf$ is $l_o$-Lipschitz.
\item[$(d)$] If $f\in \mathcal{F}$, then the function $u(t,x) := Q_t f(x)$ satisfies the following Hamilton-Jacobi equation
\begin{equation}\label{eq:HJ}
\frac{d}{dt_+}u(t,x) + \alpha^*(|\nabla^- u|)(t,x) =0, \qquad \forall t \in (0,1],  \qquad \forall x \in \RR,
\end{equation}
where $\frac{d}{dt_+}$ is the right time-derivative and 
$|\nabla^- u(t,x)| = \limsup_{y \to x} \frac{[u(t,y)-u(t,x)]_-}{|y-x|}$ (where as usual $[X]_-:=\max(-X,0)$ denotes the negative part).
\end{itemize}
Item $(a)$ is easy to check and is a general fact about infimum convolution of two convex functions ($f$ and $\alpha$). Item $(b)$ follows from the fact that, after change of variables, $Q_tf(x) = \inf_{u}\left\{f(x-u) + t\alpha\left(\frac{u}{t}\right)\right\}$ so that $Q_tf$ is an infimum of concave functions and is therefore also concave.
As for Item $(c)$ we observe that $x \mapsto t\alpha\left(\frac{x-y}{t}\right)$ is $l_o$-Lipschitz for any $y$
so that $Q_tf$ is also  $l_o$-Lipschitz as an infimum of $l_o$-Lipschitz functions.
A proof of Item $(d)$ can be found in \cite{GRS14} or \cite{AGS14}. We observe that the conclusions of Item $(c)-(d)$ hold in much more general settings.

With these properties and definitions in hand, let 
 $f \in \mathcal{F}$ and 
 (following \cite{BGL01}) define
\[
F(t):=\frac{1}{t}\log\left(\int_\RR e^{tQ_tf}\,d\mu\right), \qquad t \in (0,1].
\]
The function $F$ is right differentiable at every point $t>0$ (thanks to the above technical properties of $Q_t$, see \textit{e.g.}\ \cite{GRS14} for details)  and it holds
\begin{align*}
\frac{d}{dt_+}F(t)
&=
\frac{1}{t^2} \frac{1}{\int_\RR e^{tQ_tf}\,d\mu}
\left( \ent_\mu\left(e^{tQ_tf}\right)
+t^2 \int_\RR  \left(\frac{d}{d t_+}Q_tf\right) e^{tQ_tf}\, d\mu
\right)\\
&= 
\frac{1}{t^2} \frac{1}{\int_\RR e^{tQ_tf}\,d\mu}
\left( \ent_\mu\left(e^{tQ_tf}\right)
-Kh^2t^2 \int_\RR  |\nabla^- Q_tf|^2e^{tQ_tf}\, d\mu\right)\\
& \leq  
\frac{Kh^2}{\int_\RR e^{tQ_tf}\,d\mu}
\left(  \int |\nabla Q_tf|^2 e^{tQ_tf}\,d\mu
- \int_\RR |\nabla^- Q_tf|^2 e^{tQ_tf}\, d\mu\right)  \leq 0,
\end{align*}
where the second equality follows from \eqref{eq:HJ}, the first inequality from \eqref{eq:LSMbis} applied to the function $g=tQ_tf$ (which is convex or concave and $tl_o$-Lipschitz) and the last inequality from the fact that for a convex or concave function $g$, $|\nabla g| \leq |\nabla^- g|$ (we recall that $|\nabla g|$ is  defined in \eqref{eq:nabla}).

Thus the function $F$ is non-increasing and satisfies $F(1) \leq \lim_{t \to 0} F(t) = \int fd\mu$. In other words,
\begin{equation}\label{eq:dual-conv-conc}
\int e^{Q_1f}\,d\mu \leq e^{\int f\,d\mu} \qquad \forall f \in \mathcal{F} .
\end{equation}
%for any convex function bounded from below or for any concave and $l$-Lipschitz function $f$ with $l\leq l_o.$
Now according to Item $(iii)$ of Lemma \ref{Lem:equiv-inf-conv} 
 one concludes (on the one hand) that $\mu$ satisfies the transport-entropy inequality $\overline{\mathrm{T}}^-(\alpha)$:
$\overline{\mathcal{T}}_\alpha (\mu|\nu) \leq H(\nu|\mu)$, for all $\nu \in \mathcal{P}_1(\RR)$.

On the other hand, applying \eqref{eq:dual-conv-conc} to $f = -Q_1g$ with $g$ convex and bounded from below (so that $f$ is concave and $l_o$-Lipschitz) yields to
$e^{\int Q_1g\,d\mu} \int e^{Q_1 (-Q_1 g)}\,d\mu \leq 1$. 
Since $Q_1(-Q_1 g) \geq -g$ we end up with
\[
e^{\int Q_1g\,d\mu} \int e^{-g}\,d\mu \leq 1,
\]
for all $g$ convex and bounded from below. According to Item $(ii)$ of Lemma \ref{Lem:equiv-inf-conv}, this implies that $\mu$ satisfies the transport-entropy inequality $\overline{\mathrm{T}}^+(\alpha)$:
$
\overline{\mathcal{T}}_\alpha (\nu|\mu) \leq H(\nu|\mu)$, for all $\nu \in \mathcal{P}_1(\RR)$,
which completes the proof.
\end{proof}

The end of the section is dedicated to the proof of Proposition \ref{LSM}.

\begin{proof}[Proof fo Proposition \ref{LSM}]
Let $K$ and $c$ be defined by Lemma \ref{lem:expo} below.
We may deal first with convex functions $g$ and divide the proof into three different (sub-)cases: $g$ monotone (non-decreasing and then non-increasing), and $g$ arbitrary.
 
Assume first that $g$ is convex \emph{non-decreasing} and $l$-Lipschitz with $l\leq c/h$. Set $f=g\circ U_\mu$ (recall that $U_\mu$ is defined in the introduction). Then, since $g$ is non-decreasing, and since $U_\mu(x-1) \leq U_\mu(x)-h$ by assumption, for all $x\in \RR$, it holds
\[
f(x)-f(x-1)\leq g(U_\mu(x))-g(U_\mu(x)-h)\leq lh\leq c, \qquad \forall x \in \RR .
\]
Therefore, since $\mu$ is the image of $\tau$ under the map $U_\mu$, 
Lemma \ref{lem:expo} (apply \eqref{expo:x-1} to $f$) and the latter guarantee that 
\begin{align*}
\ent_\mu(e^g)
& = 
\ent_\tau(e^f) 
\leq 
K\int_\RR \left(f(x)-f(x-1)\right)^2e^{f(x)}\tau(dx) \\
& \leq
K\int_\RR \left(g(U_\mu(x))-g(U_\mu(x)-h)\right)^2e^{g(U_\mu(x))}\tau(dx) \\
& =
K \int_\RR \left(g(x)-g(x-h)\right)^2e^{g(x)}\mu(dx) 
 \leq 
Kh^2\int_\RR  |\nabla g(x)|^2e^{g(x)}\mu(dx),
\end{align*}
where the last inequality is due to the fact that $g$ is convex and non-decreasing 
and therefore satisfies $0\leq g(x)-g(x-h) \leq g_-'(x)h = |\nabla g(x)| h$.
As a conclusion we proved \eqref{eq:LSM} for all convex \emph{non-decreasing} and $l$-Lipschitz functions $g$ with $l\leq c/h$.

Now suppose that $g$ is convex, \emph{non-increasing} and $l$-Lipschitz with $l\leq c/h$ and set $f(x)= g(U_\mu(-x))$. The function $f$ is non-decreasing and, since $U_\mu(-x+1) \geq U_\mu(-x)+h$ by assumption, satisfies
\[
f(x)- f(x-1) = g(U_\mu(-x)) - g(U_\mu(-x+1)) \leq g(U_\mu(-x)) - g(U_\mu(-x)+h) \leq c.
\]
Similarly to the previous lines, Lemma \ref{lem:expo} implies that
%Applying \eqref{expo:x-1} to this function $f$ and using that $\tau$ is \emph{symmetric} and that $U_\mu$ transforms $\tau$ into $\mu$ yields to
\begin{align*}
\ent_\mu(e^g) 
=
\ent_\tau(e^f) 
& \leq 
K\int_\RR \left(g(U_\mu(-x))-g(U_\mu(-x+1))\right)^2e^{g(U_\mu(-x))}\tau(dx) \\
& \leq 
K \int_\RR \left(g(U_\mu(-x))-g(U_\mu(-x)+h)\right)^2e^{g(U_\mu(-x))}\tau(dx) \\
& = K\int_\RR \left(g(x)-g(x+h)\right)^2e^{g(x)}\mu(dx)
\leq 
Kh^2\int_\RR |\nabla g (x)|^2e^{g(x)}\mu(dx),
\end{align*}
where we used the symmetry of $\tau$ and that $0\leq g(x)-g(x+h) \leq g_+'(x)(-h)=|\nabla g(x)| h$.
Therefore we proved \eqref{eq:LSM} for all convex \emph{non-increasing} and $l$-Lipschitz functions $g$ with $l\leq c/h$.

Finally, consider an arbitrary convex and $l$-Lipschitz function $g$ with $l\leq c/h$ and
assume without loss of generality that $g$ is not monotone. Being convex, there exists some $a \in \RR$ such that $g$ restricted to $(-\infty,a]$ is non-increasing and $g$ restricted to $[a,\infty)$ is non-decreasing. Subtracting $g(a)$ if necessary, one can further assume that $g(a)=0$ since \eqref{eq:LSM} is invariant by the change of function $g \to g+C$ (for any constant $C$). Set $g_1=g\mathbf{1}_{(-\infty,a]}$ and $g_2 = g\mathbf{1}_{(a,\infty)}$. The functions $g_1$ and $g_2$ are convex, monotone and $l$-Lipschitz. Therefore, according to the two previous sub-cases, it holds
\[
\ent_\mu(e^{g_1}) \leq  Kh^2 \int_{-\infty}^a |\nabla g(x)|^2 e^{g(x)}\mu(dx) \mbox{ and } 
\ent_\mu(e^{g_2})\leq  Kh^2 \int_a^{+\infty} |\nabla g(x)|^2 e^{g(x)}\mu(dx) .
\]
%where we used that ${g_1}_+'(a)={g_2}_-'(a)= |\nabla g(a)|=0$.
So what remains to prove is the following sub-additivity property of the entropy functional
\[
\ent_\mu(e^{g_1+g_2}) \leq \ent_\mu(e^{g_1}) + \ent_\mu(e^{g_2}),
\]
which, since $\int ge^gd\mu = \int g_1e^{g_1}d\mu+\int g_2e^{g_2}d\mu$, amounts to proving that
\[
\int e^{g_1}d\mu \log\left(\int e^{g_1}d\mu\right) + \int e^{g_2}d\mu \log\left(\int e^{g_2}d\mu\right)
\leq 
\int e^g\,d\mu \log\left(\int e^gd\mu\right) .
\]
Setting $A= \int e^{g_1}d\mu-1$, $B= \int e^{g_2}d\mu-1$ and $X=\int e^{g}d\mu$ and observing that
$A+B+1=X$ the latter is equivalent to proving that 
$$
(A+1) \log(A+1) + (B+1) \log(B+1) \leq X \log X\,,
$$
which follows from the sub-additivity property of the convex function $\Phi \colon x \mapsto (x+1) \log(x+1)$ on $[0,\infty)$, that satisfies $\Phi(0)=0$.
This completes the proof when $g$ is convex.

The case $g$ concave follows the same lines (use \eqref{expo:x+1} instead of  \eqref{expo:x-1}). Details are left to the reader.
\end{proof}

In the proof of Proposition \ref{LSM} we used the following lemma which is a (discrete) variant of
a result by Bobkov and Ledoux \cite{BL97} and an entropic counterpart of a result (involving the variance) by Bobkov and G\"otze (see \cite[Lemma 4.8]{BG99b}).

\begin{Lem}\label{lem:expo}
For all non-decreasing function $f \colon \RR \to \RR$ with $f(x)-f(x-1)\leq 1/(10\sqrt{2})$,  $x \in \RR$, it holds
\begin{equation}\label{expo:x-1}
\ent_\tau(e^f)\leq 2740 \int_\RR \left(f(x)-f(x-1)\right)^2e^f d\tau.
\end{equation}
and
\begin{equation}\label{expo:x+1}
\ent_\tau(e^f)\leq 2740 \int_\RR \left(f(x+1)-f(x)\right)^2e^f d\tau.
\end{equation}
\end{Lem}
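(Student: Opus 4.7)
The strategy is to adapt the Bobkov--Ledoux \cite{BL97} technique for the modified logarithmic Sobolev inequality on the continuous exponential measure to the discrete-gradient setting, where the derivative $f'$ is replaced throughout by the difference $f(x)-f(x-1)$. The starting point is a discrete Poincar\'e-type inequality for $\tau$, which can be read off from Bobkov--G\"otze \cite{BG99b}: there exists an explicit constant $K_0>0$ such that for every sufficiently regular $g:\RR\to\RR$,
\[
\var_\tau(g) \leq K_0 \int \bigl(g(x)-g(x-1)\bigr)^2 d\tau(x).
\]

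Apply this Poincar\'e inequality to $g = e^{f/2}$. Using the factorization $e^{f(x)/2}-e^{f(x-1)/2} = e^{f(x-1)/2}\bigl(e^{(f(x)-f(x-1))/2}-1\bigr)$, the elementary bound $(e^u-1)^2 \leq u^2 e^{2u}$ valid for $u\geq 0$, together with the monotonicity of $f$ (hence $e^{f(x-1)} \leq e^{f(x)}$) and the hypothesis $f(x)-f(x-1)\leq c_0 := 1/(10\sqrt{2})$, one obtains
\[
\var_\tau\bigl(e^{f/2}\bigr) \leq \frac{K_0\, e^{c_0}}{4}\int \bigl(f(x)-f(x-1)\bigr)^2 e^{f(x)}\, d\tau(x).
\]
At this stage the variance of $e^{f/2}$ is controlled by the desired right-hand side of \eqref{expo:x-1}; what remains is to pass from this variance bound to an entropy bound for $e^f$, which is the crux of the argument.

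The conversion proceeds via a Rothaus-type iteration: writing $\psi = e^{f/2}$ and $\bar\psi = \int\psi\,d\tau$, the classical Rothaus lemma gives
\[
\ent_\tau(\psi^2) \leq 2\var_\tau(\psi) + \ent_\tau\bigl((\psi-\bar\psi)^2\bigr).
\]
The crucial observation is that, under the smallness hypothesis $f(x)-f(x-1)\leq c_0$, the deviation $(\psi-\bar\psi)^2$ is pointwise controlled by a small multiple of $\psi^2=e^f$ (using again a Poincar\'e-type estimate together with a pointwise comparison between local oscillations of $\psi$ and those of $f$). One thereby obtains a bound $\ent_\tau((\psi-\bar\psi)^2) \leq B \cdot \ent_\tau(e^f)$ with $B<1$, and the absorption argument then yields \eqref{expo:x-1} with an explicit constant. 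The specific numerical value $c_0 = 1/(10\sqrt{2})$ is tuned precisely so that $1-B$ is of order one, delivering the final constant $2740$. The second inequality \eqref{expo:x+1} then follows by a parallel argument (applying the same chain with the Poincar\'e inequality rewritten using the increment $(g(x+1)-g(x))^2$, and relying on the symmetry of $\tau$).

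\textbf{Main obstacle.} The delicate point is the Rothaus-type absorption step, where the smallness of $f(x)-f(x-1)$ must be converted into a pointwise/integral bound on $\ent_\tau((\psi-\bar\psi)^2)$ by a fraction of $\ent_\tau(e^f)$. This requires careful bookkeeping of constants, and it is precisely this step that fixes the explicit threshold $1/(10\sqrt{2})$ on the increments and yields the final constant $2740$ in the modified logarithmic Sobolev inequality for $\tau$.
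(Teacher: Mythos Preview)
Your proposal follows a genuinely different route from the paper, and the crux of your argument---the Rothaus absorption step---has a real gap.

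The claim that under the increment bound $f(x)-f(x-1)\leq c_0$ one has $(\psi-\bar\psi)^2$ pointwise controlled by a small multiple of $\psi^2=e^f$ is simply false. Take $f(x)=c_0 x$: then $\psi(x)=e^{c_0 x/2}$ is unbounded, $\bar\psi$ is a fixed number, and $(\psi(x)-\bar\psi)^2/\psi(x)^2\to 1$ as $x\to\infty$, not to something small. Even if a pointwise bound of that kind did hold, it would not imply $\ent_\tau((\psi-\bar\psi)^2)\leq B\,\ent_\tau(\psi^2)$, since entropy is not monotone under pointwise domination. The Rothaus lemma is the right tool for passing from a \emph{defective} log-Sobolev inequality to a tight one (by applying the defective LSI to $\psi-\bar\psi$ and then Poincar\'e), but you have only established a Poincar\'e bound on $\var_\tau(e^{f/2})$, not a defective LSI; so there is nothing to feed into Rothaus, and the absorption step does not close.

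The paper avoids this issue entirely by not going through $\var_\tau(e^{f/2})$ at all. It starts from the elementary bound $\ent_\tau(e^f)\leq\int (fe^f-e^f+1)\,d\tau=\int\!\int_0^1 t f^2 e^{tf}\,dt\,d\tau$ (valid since $u\log u\geq u-1$), normalises so that $f(0)=0$, and splits the integral over the two half-lines. On each side one is then left with controlling quantities of the type $\int f^2 e^f\,d\tau^+$ and $\int \tilde f^2\,d\tau^+$ (with $\tilde f(y)=-f(-y)$). The key input is not a Poincar\'e inequality for $\tau$ but the one-sided $L^2$ bound from \cite[Lemma~4.7]{BG99b}: $\int g^2\,d\tau^+\leq 200\int (g(x)-g(x-1))^2\,d\tau^+$ for non-decreasing $g$ with $g(0)=0$. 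This is applied to $\tilde f$ (with a Jensen/Cauchy--Schwarz trick to insert the weight $e^f$) and to $g=fe^{f/2}$; the latter produces a self-referencing inequality of the form $B\leq 400\int(\Delta f)^2 e^f\,d\tau^+ +100c^2 B$ with $B=\int f^2 e^f\,d\tau^+$, and it is \emph{this} absorption (not a Rothaus-type one) that fixes the threshold $c=1/(10\sqrt{2})$ and the constant $2740$. Inequality \eqref{expo:x+1} then follows from \eqref{expo:x-1} by the change of variables $y\mapsto y+1$ and the comparison $e^{-|y+1|}\leq e\,e^{-|y|}$.
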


\begin{proof}
Let $\tau^+$ be the exponential probability measure on $\RR^+$:  $\tau^+(dx) = e^{-x} \mathds{1}_{[0,\infty)}dx$.
We shall use the following fact, borrowed from \cite[Lemma 4.7]{BG99b} (with $a=0$ and $h=1$ so that the constant $c(a,h)$ appearing in \cite{BG99b} can be explicitly bounded by $1/200$):
for all $f \colon [-1,\infty) \to \RR$ non-decreasing and satisfying $f(0)=0$, it holds
\begin{equation}\label{pluie}
\int f^2 d\tau^+ \leq 200 \int (f(x)-f(x-1))^2 d\tau^+(x) .
\end{equation}
We will first prove \eqref{expo:x-1}.
Since  \eqref{expo:x-1} is invariant by the change of function $f \to f+C$ for any constant $C$, we may assume without loss of generality that $f(0)=0$. Set $\tilde f(y):=-f(-y)$, $y\in \RR$ and observe that $f$ is
non-decreasing. 
Since
$u\log u\geq u-1$ for all $u\geq 0$, one has 
\begin{align} \label{200}
\ent_\tau(e^f)
&\leq 
\int (fe^f-e^f+1) \,d\tau=\int \left(\int_0^1tf^2e^{tf}dt\right)d\tau \nonumber \\
& = 
\frac{1}{2}\int_0^{\infty} f^2 \left(\int_0^1 te^{tf}dt\right)d\tau^+ + \frac{1}{2}\int_0^{\infty} \tilde{f}^2 \left(\int_0^1 te^{-t\tilde{f}}dt\right)d\tau^+ \nonumber \\
&\leq 
\frac14 \int f^2 e^f d\tau^++\frac14 \int \tilde f^2 d\tau^+,
\end{align} 
where the last inequality comes from the fact both $f$ and $\tilde{f}$ are non-negative on $\RR^+$.
Now suppose that the function $f$ is such that $f(y)-f(y-1)\leq c$ for all $y\in \RR$ and some $c \in (0,1)$.
Our aim is to bound each term in the right hand side of the latter.
%
%The inequality  \eqref{expo:x-1} follows from the two next inequalities, 
%\begin{equation}\label{prem}
%A=\int \tilde f^2\, d\tau^+\leq 14 e^{\sqrt 7 c+1} \int (f(y)-f(y-1))^2 e^{f(y)} \,d\tau (y),
%\end{equation}
%and if $c\leq \sqrt{2}/\sqrt 7$ then
%\begin{equation}\label{deux}
%B=\int f^2 e^f\,d\tau^+\leq \frac{28}{2-7c^2}  \int (f(y)-f(y-1))^2 e^{f(y)}\,d\tau(y).
%\end{equation}
% Thus we obtain \eqref{expo:x-1} with the constant $K= \frac{7}{2-7c^2}+ \frac{7}{2}e^{\sqrt 7 c+1}.$
 
By \eqref{pluie} applied to the function $\tilde f$, one has 
\begin{align*}
 \int \tilde f^2 & d\tau^+  
 \leq 
 200  \int (\tilde f(y)-\tilde f(y-1))^2d\tau^+(y)  = \frac{200}{\int e^{-\tilde f} d\nu}
 \int (\tilde f(y)-\tilde f(y-1))^2 e^{-\tilde f(y)}\,d\tau^+(y) \\
 &\leq
 200 \exp\left(\frac{\int \tilde f(y)(\tilde f(y)-\tilde f(y-1))^2\,d\tau^+(y)}{ \int (\tilde f(y)-\tilde f(y-1))^2\,d\tau^+(y)}\right) \int (\tilde f(y)-\tilde f(y-1))^2 e^{-\tilde f(y)}\,d\tau^+(y).
 \end{align*}
where we set $\frac{d\nu}{d\tau^+}(y)=\frac {(\tilde f(y)-\tilde f(y-1))^2}{ \int (\tilde f(y)-\tilde f(y-1))^2 \,d\tau^+(y)}$ and we used Jensen's inequality to guarantee that $1/\int e^{-\tilde f}d\nu \leq e^{\int \tilde fd\nu}$.
By Cauchy-Schwarz' inequality and using  \eqref{pluie} again, we get
\begin{align*}
\int \tilde f(y)(\tilde f(y)-\tilde f(y-1))^2d\tau^+(y)
& \leq 
\left(\int (\tilde f(y)-\tilde f(y-1))^4d\tau^+(y)\right)^{1/2} \left(\int \tilde f^2d\tau^+\right)^{1/2}\\
 &\leq 
 \sqrt{200} c \int (\tilde f(y)-\tilde f(y-1))^2d\tau^+(y).
 \end{align*}
 It finally follows that 
 \begin{align*}
\int \tilde f^2  d\tau^+
&\leq  
200 e^{\sqrt{200} c} \int (\tilde f(y)-\tilde f(y-1))^2 e^{-\tilde f(y)}\,d\tau^+(y)\\
 &= 
 200 e^{\sqrt{200} c} \int_{-\infty}^0 ( f(y+1)- f(y))^2 e^{ f(y)} e^y\,dy\\
&= 
200 e^{\sqrt{200} c-1} \int_{-\infty}^1 ( f(y)- f(y-1))^2 e^{ f(y-1)} e^y\,dy \\
&\leq 400 e^{\sqrt{200} c+1} \int_{-\infty}^1 ( f(y)- f(y-1))^2 e^{ f(y)} \,d\tau(y)\,,
\end{align*}
where in the last line we used that $e^y/(e^{-|y|}/2) \leq 2e^2$ for all $y \leq 1$.

Next we deal with the first term in the right hand side of \eqref{200}.
Our aim is to apply \eqref{pluie} to $g=fe^{f/2}$. 
Observe that, since $f$ is non-decreasing, $f(x) \geq f(-1) \geq -c + f(0)=-c \geq -1$ so that, since $x \mapsto 
xe^{x/2}$ is non-increasing on $[-2,\infty)$ we are guaranteed that $g$ is non-decreasing on $[-1,\infty)$ and therefore that we can apply \eqref{pluie} to $g$.
Applying \eqref{pluie} to $g=fe^{f/2}$ and using the inequality 
 \[
 0\leq be^{b/2} - ae^{a/2} \leq (b-a)e^{b/2} + \frac{b}{2}(b-a)e^{b/2}, \quad -2\leq a \leq b,
 \]
 we get
 \begin{align*}
 & B := \int f^2 e^f d\tau^+ 
 \leq  
 200\int \left(f(y)e^{f(y)/2}-f(y-1)e^{f(y-1)}\right)^2\,d\tau^+(y)\\
 &\leq  
 400 \int (f(y)-f(y-1))^2 e^{f(y)}\,d\tau^+(y)
 +100 \int f^2(y)(f(y)-f(y-1))^2 e^{f(y)}\,d\tau^+(y)\\
 &\leq  
 400 \int (f(y)-f(y-1))^2 e^{f(y)}\,d\tau^+(y) + 100 c^2 B.
 \end{align*}
Therefore, provided $c < 1/10$ we end up with $B \leq 400/(1-100c^2) \int (f(y)-f(y-1))^2 e^{f(y)}\,d\tau^+(y)$.
Hence, plugging the previous two bounds into \eqref{200} and choosing $c=1/\sqrt{200}$, Inequality \eqref{expo:x-1} follows with the better constant $939$ in factor of the right hand side.

 To obtain \eqref{expo:x+1} from \eqref{expo:x-1}, it suffices to observe that, by a simple change of variables
 \begin{align*}
  \int ( f(y)- f(y-1))^2 e^{ f(y)} \, d\tau(y)&= \int ( f(x+1)- f(x))^2 e^{ f(x+1)}  \frac{e^{-|y+1|}}2 \,dy\\
  &\leq e^{c+1}  \int ( f(x+1)- f(x))^2 e^{ f(x)}\,d\tau(x)
  \end{align*}
  and that $939 e^{c+1} \leq 2740$ for $c=1/\sqrt{200}$. This ends the proof.
\end{proof}

\section{Proof of Theorem \ref{thefinal}}\label{sectionfinal}

In this final section we prove  Theorem \ref{thefinal}. As mentioned in the introduction, we may need to decompose the cost $\theta$ into the sum of two costs, one that takes care of the behavior near $0$ (the cost $\theta_1$) and the other one vanishing in a neighborhood of $0$ (the cost $\theta_2$). The next theorem deals with the latter.

\begin{The}\label{thebeta}
Let $\mu\in \mathcal{P}_1(\RR)$ and $\beta:\RR^+ \to \RR^+$ be a convex cost function such that $\{t \in \RR^+ : \beta(t) =0\} = [0,t_o]$, where $t_o>0$ is some positive constant. The following are equivalent:
\begin{enumerate}
\item[$(i)$] There exists $a>0$ such that $\mu$ satisfies the transport-entropy inequality $\mathrm{T}(\beta(a\,\cdot\,))$.
\item[$(ii)$] There exists $a'>0$ such that $\mu$ satisfies the weak transport-entropy inequality $\overline{\mathrm{T}}(\beta(a'\,\cdot\,))$.
\item[$(iii)$] There exists $b>0$ such that $\max(K^+(b),K^-(b)) < \infty $, where 
\[
K^+(b):=\sup_{x\geq m}\frac{1}{\mu(x,\infty)}\int^{\infty}_{x}e^{\beta(b(u-x))}\mu(du),
\] 
\[
K^-(b):=\sup_{x\leq m}\frac{1}{\mu(-\infty,x)}\int_{-\infty}^{x}e^{\beta(b(x-u))}\mu(du),
\]
and $m$ is a median of $\mu$. (Here we use the convention $0/0=0$.)
\item[$(iv)$] There exists $d>0$ such that
\[
|U_\mu(u)-U_\mu(v)|\leq \frac{1}{d}\beta^{-1}(|u-v|),\qquad  \forall u\neq v\in \RR.
\]
(Note that $\beta^{-1}$ is well defined on $(0,\infty)$.)
\end{enumerate}
In particular, 
\begin{itemize}
\item $(i) \Rightarrow (ii)$ with $a'=a$,
\item $(ii) \Rightarrow (iii)$ with $b=a'/2$,
\item $(ii) \Rightarrow (iv)$ with  $d = a' \frac{t_o}{ 8\beta^{-1}(\log 3)}$, 
\item $(iv) \Rightarrow (ii)$ with $a' = d \frac{t_o}{9 \beta^{-1}(2)}.$
\end{itemize}

\end{The}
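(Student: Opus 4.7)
The plan is to prove the four equivalences through the cycle $(i)\Rightarrow(ii)\Rightarrow(iii)\Rightarrow(i)$ together with the direct equivalence $(ii)\Leftrightarrow(iv)$. The easy implications rely on Jensen's inequality and on the dual formulation of Lemma~\ref{Lem:equiv-inf-conv}; the closing arrow $(iii)\Rightarrow(i)$ is the classical tail-integrability characterization of Talagrand's inequality on $\RR$ (see \cite{DGW04} and \cite{Goz12}), invoked without tracking constants. The technical core is $(iv)\Rightarrow(ii)$.

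The implication $(i)\Rightarrow(ii)$ is immediate: for any convex cost $\theta$, Jensen's inequality yields $\overline{\mathcal T}_\theta(\nu|\mu)\leq\mathcal T_\theta(\nu,\mu)$ and, by symmetry of $\mathcal T_\theta$, also $\overline{\mathcal T}_\theta(\mu|\nu)\leq\mathcal T_\theta(\mu,\nu)=\mathcal T_\theta(\nu,\mu)$, so $\mathrm T(\beta(a\,\cdot\,))$ transfers to $\overline{\mathrm T}(\beta(a\,\cdot\,))$ with $a'=a$. For $(ii)\Rightarrow(iii)$, I plug the convex, bounded-below test function $g_c(u):=c(u-x)_+$, $x\geq m$, into the dual inequality of Lemma~\ref{Lem:equiv-inf-conv}(iv). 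For $c$ large the infimum defining $Q_2 g_c$ is attained at $y=x$, giving $Q_2g_c(u)=2\beta(a'(u-x)_+/2)$. Letting $c\to\infty$ in $\int e^{Q_2g_c}\,d\mu\cdot\int e^{-g_c}\,d\mu\leq 1$ and using $\mu(-\infty,x]\geq 1/2$ yields $\int_x^\infty e^{2\beta(a'(u-x)/2)}\,\mu(du)\leq 4\mu(x,\infty)$, so $K^+(b)\leq 4$ with $b=a'/2$; the bound $K^-(b)\leq 4$ follows by the symmetric choice $g_c(u)=c(x-u)_+$ for $x\leq m$.

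For $(ii)\Rightarrow(iv)$, Markov's inequality applied to the tail integrability above yields $\mu(x+h,\infty)\leq 4e^{-\beta(bh)}\mu(x,\infty)$ for $x\geq m$, $h>0$. Writing $\mu=(U_\mu)_\#\tau$, I set $x=U_\mu(s)$ for $s\geq 0$ and combine this with the bounds $\mu((U_\mu(s),\infty))\leq e^{-s}/2$ and $\mu([U_\mu(s+t),\infty))\geq e^{-(s+t)}/2$. A small limit passage $h\uparrow U_\mu(s+t)-U_\mu(s)$ (to deal with possible jumps of $U_\mu$) produces $\beta(b(U_\mu(s+t)-U_\mu(s)))\leq t+\log 4$. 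Using concavity of $\beta^{-1}$ together with $\beta^{-1}\geq t_o$, the additive term $\log 4$ is absorbed into a multiplicative factor bounded by $8\beta^{-1}(\log 3)/t_o$, which yields the claim on the positive side; the negative side is handled symmetrically.

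The main step $(iv)\Rightarrow(ii)$ is approached in two stages. First, I establish $(iv)\Rightarrow(iii)$ with explicit constants by a direct computation: the convexity of $\beta$ with $\beta(0)=0$ gives $\beta(\lambda s)\leq\lambda\beta(s)$ for $\lambda\in[0,1]$, so condition (iv) yields $\beta(b(U_\mu(y)-U_\mu(s)))\leq(b/d)(y-s)$ whenever $b\leq d$ and $y>s\geq 0$. Using $\mu=(U_\mu)_\#\tau$ and the change of variable $u=U_\mu(y)$ then gives
\[
\int_x^\infty e^{\beta(b(u-x))}\,\mu(du)\leq \int_s^\infty e^{(b/d)(y-s)}\,\tau(dy) \leq \frac{\mu(x,\infty)}{1-b/d}
\]
for $b<d$ and $x=U_\mu(s)$, showing $K^+(b)\leq d/(d-b)$, and similarly for $K^-$. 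The qualitative implication $(iv)\Rightarrow(ii)$ then follows from the chain $(iv)\Rightarrow(iii)\Rightarrow(i)\Rightarrow(ii)$ closed by the classical Djellout--Guillin--Wu/Bobkov--G\"otze result. To extract the explicit constant $a'=dt_o/(9\beta^{-1}(2))$ claimed in the theorem, one runs a direct Hamilton--Jacobi interpolation argument in the spirit of Section~\ref{sectionpoincare}: the exponential moments of the inf-convolution operator $Q_1 g$ (for convex test functions $g$) on $\mu$ are controlled via push-forward by $U_\mu$ and the tail integrability already obtained in (iii), with the entropy threshold $2$ selecting the value $\beta^{-1}(2)$. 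The main obstacle is the loss of convexity of $g\circ U_\mu$ after push-forward, which is circumvented by splitting into monotone pieces and invoking Lemma~\ref{lem:expo} as in the proof of Proposition~\ref{LSM}.
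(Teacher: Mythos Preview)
Your implications $(i)\Rightarrow(ii)$ and $(ii)\Rightarrow(iii)$ coincide with the paper's argument: Jensen for the first, and plugging the extremal convex test function into the dual inequality of Lemma~\ref{Lem:equiv-inf-conv}(iv) for the second (the paper uses directly $f_x=0$ on $(-\infty,x]$ and $+\infty$ elsewhere instead of your limit $c\to\infty$ in $g_c$, but the effect is identical). Your direct derivation of $(ii)\Rightarrow(iv)$ via Markov and the concavity of $\beta^{-1}$ is essentially the $(iii)\Rightarrow(iv)$ step of \cite{Goz12} and works. For the remaining arrows the paper takes a much shorter path: it simply invokes \cite[Theorem~2.2]{Goz12}, where the equivalences $(i)\Leftrightarrow(iii)\Leftrightarrow(iv)$ are already established \emph{with explicit constants}; in particular the value $a'=dt_o/(9\beta^{-1}(2))$ in $(iv)\Rightarrow(ii)$ is obtained by composing the explicit $(iv)\Rightarrow(i)$ from \cite{Goz12} with the trivial $(i)\Rightarrow(ii)$ (which has $a'=a$).

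Your proposal for that last constant has a genuine gap. After the (legitimate) qualitative closure via Djellout--Guillin--Wu, you claim to recover the precise constant through ``a direct Hamilton--Jacobi interpolation argument in the spirit of Section~\ref{sectionpoincare}\ldots invoking Lemma~\ref{lem:expo}''. This is a gesture, not a proof. The machinery of Section~\ref{sectionpoincare} is tailored to costs that are \emph{quadratic near zero}, so that the modified log-Sobolev inequality and the Hamilton--Jacobi equation $\partial_t Q_t f+\alpha^*(|\nabla^- Q_t f|)=0$ take a usable form; here $\beta$ \emph{vanishes} on $[0,t_o]$, so $\beta^*$ is linear near zero and the role of Lemma~\ref{lem:expo} (which bounds small discrete gradients under an $l$-Lipschitz constraint) is unclear. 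Nothing in your sketch explains how the specific numerical value $9\beta^{-1}(2)$ would emerge. The clean fix is the paper's: cite \cite{Goz12} for $(iv)\Rightarrow(i)$ with its explicit constant and then pass to $(ii)$ with $a'=a$.
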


\begin{proof}[Proof of Theorem \ref{thebeta}]
The equivalence between Items $(i)$, $(iii)$ and $(iv)$ is proved in  \cite[Theorem 2.2]{Goz12}, with some explicit dependency between the constants.
In order to complete the proof of Theorem \ref{thebeta} we need to show that $(i) \Rightarrow (ii) \Rightarrow (iii)$.

It follows from Jensen's inequality that
$
\mathcal{T}_{\beta(a\,\cdot\,)}(\mu,\nu) \geq \max \left(\mathcal{T}_{\beta(a\,\cdot\,)}(\nu|\mu) ; \mathcal{T}_{\beta(a\,\cdot\,)}(\mu|\nu)\right).
$
Therefore $(i)$ implies $(ii)$ with $a'=a$.

Next we will show that $(ii)$ implies $(iii)$. Assume that $\mu$ satisfies $\overline{\mathrm{T}}(\beta(a'\,\cdot\,))$ for some $a'>0$.  According to Item $(iv)$ of Lemma \ref{Lem:equiv-inf-conv}, for all convex functions $g:\RR \to \RR$ bounded from below, it holds
\[
\int \exp(Qf)\,d\mu \int e^{-f}\,d\mu \leq 1, \qquad
\mbox{where} \quad
Qf(x)=\inf_{y\in \RR}\{f(y)+2\beta(a'|y-x|/2)\}.
\]
Consider the convex function $f_x$ which equals to $0$ on $(-\infty,x]$ and $\infty$ otherwise. Then $Qf_x(y)=0$ on $(-\infty,x]$ and $Qf_x(y)=2\beta(a'(y-x)/2)$ on $(x,\infty)$. Thus, applying the inequality above to $f_x$  yields
\[
\left(\mu(-\infty,x]+\int_{(x,\infty)}e^{2\beta(a'(y-x)/2)}\mu(dy)\right)\mu(-\infty,x]\leq 1.
\]
Considering $x \geq m$ yields that $K^+(a'/2)\leq 3$. One proves similarly that $K^-(a'/2)\leq 3$. This shows that $(ii)$ implies $(iii)$ with $b=a'/2$. This achieves the proof of Theorem \ref{thebeta}.
\end{proof}

We are now in a position to prove Theorem \ref{thefinal}.
\proof[Proof of Theorem \ref{thefinal}]
Let $\theta:\RR^+ \to \RR^+$ be a convex cost function such that $\theta(t) = t^2$ on $[0,t_o]$ for some $t_o>0.$
Let us define $\theta_1(t)=t^2$ on $[0,t_o]$ and $\theta_1(t) = 2tt_o - t_o^2$ on $[t_o,+\infty)$ and $\theta_2(t)=[\theta(t)-t^2]_+.$ Note that $\theta_1$ and $\theta_2$ are both convex and that $\theta_2$ vanishes on $[0,t_o]$ and that $\max(\theta_1,\theta_2)\leq \theta \leq \theta_1 + \theta_2.$

First assume that $\mu$ satisfies the weak transport-entropy inequality $\overline{\mathrm{T}}(\theta (a\,\cdot\,))$ for some $a>0$ (\textit{i.e.}\ Item $(i)$ of Theorem \ref{thefinal}). Then, since $\theta \geq \theta_2$ it clearly satisfies $\overline{\mathrm{T}}(\theta_2(a\,\cdot\,)).$ According to Theorem \ref{thebeta}, the mapping $U_\mu$ sending the exponential measure on $\mu$ satisfies the condition:
\begin{equation}\label{eq:contraction-rappel}
\sup_{x \in \RR}U_\mu(x+u) - U_\mu(x) \leq \frac{1}{b} \theta_2^{-1}(u),\qquad \forall u>0,
\end{equation}
with $b = a \kappa_1$, where $\kappa_1 = t_o/ (8\theta_2^{-1}(\log 3))$. Since $\theta_2^{-1}(u) = \theta^{-1}(u+t_o^2)$ this proves Item $(ii)$ of Theorem \ref{thefinal} with the announced dependency between the constants.

Now assume that $\mu$ satisfies Item $(ii)$ of Theorem \ref{thefinal}, or equivalently \eqref{eq:contraction-rappel} for some $b>0$. 
Recall that we set, in Theorem \ref{thm:equiv-Poincare}, $\kappa:=5480$ and $c:=1/(10 \sqrt{2})$.
Then, observe that, plugging $u=1$ into \eqref{eq:contraction-rappel} and using Theorem \ref{thm:equiv-Poincare}, one concludes that $\mu$ satisfies $\overline{\mathrm{T}} (\alpha)$ with $\alpha$ defined by $\alpha(u) = \bar{\alpha} (u / \sqrt{2D})$, with $D = \kappa \left(\theta^{-1}(1+t_o^2)\right)^2 \frac{1}{b^2}$ and 
\[
\bar{\alpha} (v) = \left\{
\begin{array}{ll}
v^2 & \text{if } |v| \leq c\sqrt{\kappa/2} \\
c\sqrt{\kappa} |v| - \frac{c^2 \kappa}{2} & \text{if } |v| >c\sqrt{\kappa/2} 
\end{array}\right.
=
\left\{
\begin{array}{ll}
v^2 & \text{if } |v| \leq \sqrt{137/10} \\
2\sqrt{137} |v| - \frac{137}{5} & \text{if } |v| >c\sqrt{137/10} .
\end{array}\right.
\]
It is not difficult to check that $\bar{\alpha}$ compares to $\theta_1$. More precisely, for all $v \in \RR$,
it holds
\[
\bar{\alpha} (v) 
\geq 
\theta_1\left(\max\left(\frac{c\sqrt{\kappa/2}}{t_o}; 1\right) |v| \right)
= 
\theta_1\left(\max\left(\frac{\sqrt{137/10}}{t_o}; 1\right) |v| \right) .
\]  
Therefore $\mu$ satisfies $\overline{\mathrm{T}}(\theta_1(a_1'\,\cdot\,))$, and by monotonicity 
$\overline{\mathrm{T}}(\theta_1(a_1\,\cdot\,))$
with 
\[
a_1' 
:= 
\frac{\max((c\sqrt{\kappa/2})/t_o; 1)}{\sqrt{2\kappa}\theta^{-1}(1+t_o^2)} b
= 
\frac{\max\left(\frac{\sqrt{137/10}}{t_o}; 1\right)}{4\sqrt{685}\theta^{-1}(1+t_o^2)} b
\geq \frac{1}{105} \frac{\max(1,t_o)}{t_o \theta^{-1}(1+t_o^2)} b =: a_1 
.
\]
On the other hand, according to Theorem \ref{thebeta}, $\mu$ also satisfies $\overline{\mathrm{T}}(\theta_2(a_2\,\cdot\,))$, with $a_2 = \frac{t_o}{\theta^{-1}(2+t_o^2)} b.$ Letting $a = \min(a_1,a_2)$, one concludes that $\mu$ satisfies both $\overline{\mathrm{T}}(\theta_1(a\,\cdot\,))$ and $\overline{\mathrm{T}}(\theta_2(a\,\cdot\,))$. Hence, since $\theta(at) \leq \theta_1(at)+\theta_2(at)$ and according to \eqref{eval}, it holds 
\begin{align*}
\mathcal{T}_{\theta(a\,\cdot\,)} (\nu |\mu)& \leq \mathcal{T}_{\theta_1(a\,\cdot\,)+\theta_2(a\,\cdot\,)} (\nu |\mu) =  \mathcal{T}_{\theta_1(a\,\cdot\,)} (\nu |\mu) + \mathcal{T}_{\theta_2(a\,\cdot\,)} (\nu |\mu)\\
& \leq 2 H(\nu|\mu),
\end{align*}
and so $\mu$ satisfies $\overline{\mathrm{T}}^+(\frac{1}{2}\theta(a\,\cdot\,))$. By convexity of $\theta$ and since $\theta(0)=0$, it holds $\frac{1}{2}\theta(2at) \geq \theta(at)$, and so $\mu$ satisfies  $\overline{\mathrm{T}}^+(\theta((a/2)\,\cdot\,))$.
Finally we observe that
$$
\frac{a}{2} = \frac{b}{210} \min \left(\frac{\max(1,t_o)}{t_o \theta^{-1}(1+t_o^2)}; \frac{105 t_o}{\theta^{-1}(2+t_o^2)} \right)
\geq \frac{b}{210} \frac{\min(1,t_o)}{\theta^{-1}(2+t_o^2)} =: \kappa_2 b
$$
so that, by monotonicity, $\mu$ satisfies  $\overline{\mathrm{T}}^+(\theta(\kappa_2b\,\cdot\,))$.
 The same reasoning yields the conclusion that $\mu$ satisfies $\overline{\mathrm{T}}^-(\kappa_2b\,\cdot\,))$, which completes the proof.
\endproof

\bibliographystyle{amsplain}

\providecommand{\MR}{\relax\ifhmode\unskip\space\fi MR }
% \MRhref is called by the amsart/book/proc definition of \MR.
\providecommand{\MRhref}[2]{%
  \href{http://www.ams.org/mathscinet-getitem?mr=#1}{#2}
}
\providecommand{\href}[2]{#2}

\end{document}